\title{Собственные симметрии трехмерных цепных дробей.
      }
     \date{}
\author{И.\,А.\,Тлюстангелов}
\theoremstyle{definition}
\newtheorem{definition}{Определение}
\newtheorem*{notation*}{Обозначение}
\theoremstyle{remark}
\newtheorem*{remark*}{Замечание}
\theoremstyle{plain}
\newtheorem{theorem}{Теорема}
\newtheorem*{theorem_dir*}{Теорема Дирихле}
\newtheorem{lemma}{Лемма}
\newtheorem{proposition}{Предложение}
\newtheorem{corollary}{Следствие}
\newtheorem*{statement*}{Утверждение}
\newtheorem*{corollary*}{Следствие}
\newtheorem{proof_m*}{Доказательство теоремы 1}
\DeclareMathOperator{\conv}{conv}
\DeclareMathOperator{\id}{id}
\renewcommand{\phi}{\varphi}
\renewcommand{\vec}[1]{\mathbf{#1}}
\newcommand{\R}{\mathbb{R}}
\newcommand{\Z}{\mathbb{Z}}
\newcommand{\Q}{\mathbb{Q}}
\newcommand{\cK}{\mathcal{K}}
\newcommand{\gA}{\mathfrak{A}}
\newcommand{\gR}{\mathfrak{R}}
\newcommand{\Sl}{\textup{SL}}
\newcommand{\Gl}{\textup{GL}}
\newcommand{\trace}{\textup{Tr}}
\newcommand{\cf}{\textup{CF}}
\newcommand{\gal}{\textup{Gal}}
\begin{document}

\maketitle

\begin{abstract}
В данной работе доказывается критерий наличия у алгебраической цепной дроби собственной палиндромической симметрии в размерности $4$. В качестве многомерного обобщения цепных дробей рассматриваются полиэдры Клейна. 

\end{abstract}

\footnotetext[0]{Исследование выполнено за счёт гранта Российского научного фонда №~22-21-00079.}

\section{Введение}\label{intro}

Обыкновенная цепная дробь действительного числа имеет весьма изящную геометрическую интерпретацию, позволяющую перейти от классического случая к многомерному (см. \cite{klein} и, например, \cite{korkina_2dim}, \cite{german_bordeaux}, \cite{karpenkov_book}). Для описания такого обобщения рассмотрим $l_1,\ldots,l_n$ --- одномерные подпространства пространства $\R^n$, линейная оболочка которых совпадает со всем $\R^n$. Гиперпространства, натянутые на всевозможные $(n-1)$-наборы из этих подпространств, разбивают $\R^n$ на $2^n$ симплициальных конусов. Будем обозначать множество этих конусов через
\[ \mathcal{C}(l_1, \ldots, l_n).\]

Симплициальный конус с вершиной в начале координат $\vec{0}$ будем называть \emph{иррациональным}, если линейная оболочка любой его гиперграни не содержит целых точек, кроме начала координат  $\vec{0}$.

\begin{definition}
  Пусть  $C$ --- иррациональный конус, $C \in \mathcal{C}(l_1, \ldots, l_n)$. Выпуклая оболочка $\cK(C) = \conv(C\cap\Z^{n}\setminus\{\vec{0}\} )$ и его граница  $\partial(\cK(C))$ называются соответственно \emph{полиэдром Клейна} и \emph{парусом Клейна}, соответствующими конусу $C$. Объединение же всех $2^n$ парусов
  \[\cf(l_1, \ldots, l_n) = {\underset{C \, \in \, \mathcal{C}(l_1, \ldots, l_n)}{\bigcup}} \partial(\cK(C))\]
  называется \emph{$(n-1)$-мерной цепной дробью}.
\end{definition}

Особенный интерес представляет так называемый алгебраический случай. Напомним, что оператор из $\Gl_{n}(\Z)$ с вещественными собственными значениями, характеристический многочлен которого неприводим над $\Q$, называется \emph{гиперболическим}. Справедливо следующее утверждение о связи гиперболических операторов с алгебраическими числами в случае произвольного $n$ (подробности см., например, в \cite{german_tlyust_2})

\begin{proposition}\label{prop:more_than_pelle_n_dim}
  Числа $1,\alpha_1,\ldots,\alpha_{n-1}$ образуют базис некоторого вполне вещественного расширения $K$ поля $\Q$ тогда и только тогда, когда вектор $(1,\alpha_1,\ldots,\alpha_{n-1})$ является собственным для некоторого гиперболического оператора $A\in\Sl_n(\Z)$.
  При этом вектора $(1,\sigma_i(\alpha_1),\ldots,\sigma_i(\alpha_{n-1}))$, $i=1,\ldots,n$, где $\sigma_1(=\id),\sigma_2,\ldots,\sigma_n$ --- все вложения $K$ в $\R$, образуют собственный базис оператора $A$.
\end{proposition}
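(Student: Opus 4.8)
The plan is to prove the two implications separately and to extract the eigenbasis assertion as a byproduct of the argument.

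\medskip

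\noindent\textbf{Eigenvector $\Rightarrow$ basis.} Suppose $A\mathbf{v}=\lambda\mathbf{v}$ with $\mathbf{v}=(1,\alpha_1,\ldots,\alpha_{n-1})$ and $A\in\Sl_n(\Z)$ hyperbolic. Since the characteristic polynomial $\chi_A$ is irreducible over $\Q$ of degree $n$, the eigenvalue $\lambda$ is algebraic of degree $n$; set $K=\Q(\lambda)$. Hyperbolicity means every root of $\chi_A$ is real, so all $n$ embeddings of $K$ into $\R$ are real and $K$ is totally real. As $\chi_A$ is separable, $\lambda$ is a simple root and the eigenspace is one-dimensional; solving $(A-\lambda I)\mathbf{v}=\mathbf{0}$ by cofactors (the columns of the adjugate of $A-\lambda I$) exhibits each $\alpha_j$ as a polynomial in $\lambda$, so $\alpha_j\in K$. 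I would then prove linear independence and the eigenbasis claim together.

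\medskip

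\noindent\textbf{The eigenbasis and independence.} For each embedding $\sigma_i\colon K\to\R$, applying $\sigma_i$ entrywise to $A\mathbf{v}=\lambda\mathbf{v}$ and using that $A$ has rational entries gives $A\mathbf{w}_i=\sigma_i(\lambda)\mathbf{w}_i$, where $\mathbf{w}_i=(1,\sigma_i(\alpha_1),\ldots,\sigma_i(\alpha_{n-1}))$. The eigenvalues $\sigma_i(\lambda)$ are the $n$ distinct roots of $\chi_A$, so the $\mathbf{w}_i$ are eigenvectors for distinct eigenvalues and hence form a basis of $\R^n$; this is precisely the eigenbasis assertion. In particular the matrix $M=(\sigma_i(\alpha_j))_{i,j}$ (with $\alpha_0=1$) is invertible. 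If $\sum_j c_j\alpha_j=0$ with $c_j\in\Q$, then applying every $\sigma_i$ yields $M\mathbf{c}=\mathbf{0}$, whence $\mathbf{c}=\mathbf{0}$; thus $1,\alpha_1,\ldots,\alpha_{n-1}$ are a $\Q$-basis of $K$.

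\medskip

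\noindent\textbf{Basis $\Rightarrow$ eigenvector.} I would realize the desired operator as multiplication by a suitable algebraic unit. Let $\Gamma=\Z\cdot 1+\Z\alpha_1+\cdots+\Z\alpha_{n-1}$, a full lattice in $K$, and let $\cO=\{\beta\in K:\beta\Gamma\subseteq\Gamma\}$ be its ring of multipliers, an order in $K$. For any unit $\varepsilon\in\cO^\times$ one has $\varepsilon\Gamma=\Gamma$, so multiplication by $\varepsilon$ in the basis $\omega_0=1,\ \omega_j=\alpha_j$ is given by an integer matrix $B\in\Gl_n(\Z)$ with $\det B=\norm_{K/\Q}(\varepsilon)=\pm1$. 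Setting $A=\tr{B}$ and repeating the embedding computation yields $A\mathbf{w}_i=\sigma_i(\varepsilon)\mathbf{w}_i$; for $\sigma_1=\id$ this is $A\mathbf{v}=\varepsilon\mathbf{v}$. The characteristic polynomial of $A$ equals $\prod_i\bigl(x-\sigma_i(\varepsilon)\bigr)$, whose roots are all real because $K$ is totally real.

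\medskip

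\noindent\textbf{The main obstacle.} The crux is to choose $\varepsilon$ \emph{primitive} of norm one, i.e. with $\norm_{K/\Q}(\varepsilon)=1$ and $\Q(\varepsilon)=K$, so that $A\in\Sl_n(\Z)$ is genuinely hyperbolic. By Dirichlet's unit theorem $\cO^\times$ has rank $r_1+r_2-1=n-1$ (here $r_1=n$, $r_2=0$), and the norm-one units form a finite-index subgroup, hence again of rank $n-1$. Every proper subfield $F\subsetneq K$ is totally real of degree $d<n$, so its norm-one units have rank $d-1<n-1$; since there are only finitely many subfields and, via the logarithmic embedding, their units map into finitely many proper subspaces of a lattice of full rank $n-1$, they cannot exhaust the norm-one units of $\cO$. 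Hence a norm-one unit $\varepsilon$ with $\Q(\varepsilon)=K$ exists; its minimal polynomial has degree $n$ and equals $\chi_A$, which is therefore irreducible over $\Q$. Together with $\det A=1$ and the reality of the eigenvalues, this makes $A$ hyperbolic and finishes the proof. I expect this primitivity-and-covering step to be the real difficulty, the remainder being routine.
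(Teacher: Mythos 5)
The paper does not prove this proposition at all --- it is quoted as known, with the reader referred to \cite{german_tlyust_2} for details --- so there is no in-paper argument to compare against. Your proof is correct and is essentially the standard one (and the one used in the cited reference): the forward direction via the adjugate of $A-\lambda I_n$ and conjugation by the embeddings, the converse by realizing $A$ as multiplication by a unit of the multiplier order of the lattice $\Z+\Z\alpha_1+\dots+\Z\alpha_{n-1}$, with the only real work being the existence of a \emph{primitive} norm-one unit, which your rank-counting argument (a rank-$(n-1)$ group cannot be covered by finitely many subgroups of smaller rank coming from the finitely many proper subfields) handles correctly.
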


В случае $n=2$ предложение \ref{prop:more_than_pelle_n_dim} позволяет геометрически проинтерпретировать классическую теорему Лагранжа о периодичности обыкновенной цепной дроби. Геометрически теорема Лагранжа означает, что последовательность целочисленных длин и углов паруса одномерной цепной дроби $\cf(l_1, l_2)$ периодична тогда и только тогда, когда направления $l_1$ и $l_2$ являются собственными для некоторого $\Sl_{2}(\Z)$ оператора с различными вещественными собственными значениями (см., например \cite{german_tlyust}).

\begin{definition}
  Пусть $l_1,\ldots,l_n$ --- собственные подпространства некоторого гиперболического оператора $A\in\Gl_n(\Z)$. Тогда $(n-1)$-мерная цепная дробь $\cf(l_1,\ldots,l_n)$ называется \emph{алгебраической}. Мы будем также говорить, что эта дробь \emph{ассоциирована} с оператором $A$ и писать $\cf(A)=\cf(l_1,\ldots,l_n)$. Множество всех $(n-1)$-мерных алгебраических цепных дробей будем обозначать $\gA_{n-1}$.
\end{definition}

Будем называть \emph{группой симметрий} алгебраической цепной дроби $\cf(A)=\cf(l_1,\ldots,l_n)$ множество
\[
  \textup{Sym}_{\Z}\big(\cf(A)\big)=
  \Big\{ G\in\Gl_n(\Z) \ \Big|\ G\big(\cf(A)\big)=\cf(A) \Big\}.
\]
Из соображений непрерывности ясно, что для каждого $G\in\textup{Sym}_{\Z}\big(\cf(A)\big)$ однозначно определена перестановка $\sigma_G$, такая что
\begin{equation} \label{eq:repres}
  G(l_{i})=l_{\sigma_G(i)},\quad i=1,\dots,n.
\end{equation}
И обратно, если для $G\in\Gl_n(\Z)$ существует такая перестановка $\sigma_{G}$, что выполняются соотношения \ref{eq:repres}, то $G\in\textup{Sym}_{\Z}\big(\cf(A)\big)$.

Благодаря теореме Дирихле об алгебраических единицах существует изоморфная $\Z^{n-1}$ подгруппа группы $\textup{Sym}_{\Z}\big(\cf(A)\big)$ (см., например, \cite{german_tlyust_2}). Относительно действия этой подгруппы на любом из $2^n$ парусов возникает фундаментальная область, которую можно отождествить с $(n-1)$-мерным тором (см. \cite{korkina_2dim}). Для каждого элемента $G$, принадлежащего этой подгруппе, $\sigma_G=\id$. Однако, в $\textup{Sym}_{\Z}\big(\cf(A)\big)$, вообще говоря, могут существовать такие элементы $G$, для которых $\sigma_G\neq\id$.

\begin{definition}
  Оператор $G\in\textup{Sym}_{\Z}\big(\cf(A)\big)$, такой что $\sigma_G=\id$, будем называть \emph{симметрией Дирихле} дроби $\cf(A)\in\gA_{n-1}$.
\end{definition}

\begin{definition}
  Оператор $G\in\textup{Sym}_{\Z}\big(\cf(A)\big)$, не являющийся симметрией Дирихле, будем называть \emph{палиндромической симметрией} дроби $\cf(A)$. Если множество палиндромических симметрией цепной дроби непусто, то такую цепную дробь будем называть \emph{палиндромичной}.
\end{definition}

\begin{definition}
  Палиндромическая симметрия $G\in\textup{Sym}_{\Z}\big(\cf(A)\big)$ называется \emph{собственной}, если у оператора $G$ существует неподвижная точка на некотором парусе цепной дроби $\cf(A)$. Палиндромическая симметрия $G\in\textup{Sym}_{\Z}\big(\cf(A)\big)$, не являющаяся собственной, называется  \emph{несобственной}.
\end{definition}

В данной работе нас будут интересовать собственные палиндромические симметрии $\cf(A)$ в размерности $n=4$. А именно, мы докажем критерий наличия такого рода симметрий у цепной дроби $\cf(A)$. Для размерностей $n=2$ и $n=3$ аналогичные критерии уже существуют. 

Для $n=2$, то есть для одномерных цепных дробей, палиндромичность напрямую связана с симметричностью периодов обыкновенных цепных дробей квадратичных иррациональностей. Критерий симметричности периода цепной дроби квадратичной иррациональности восходит к результатам Галуа \cite{galois}, Лежандра \cite{legendre}, Перрона \cite{perron_book} и Крайтчика \cite{kraitchik}. В работе \cite{german_tlyust} дано геометрическое доказательство этого критерия. Аналог этого критерия для $n=3$ был получен в работе \cite{german_tlyust_2}. Упомянутые критерии выглядят следующим образом:

\begin{proposition}\label{two_dimension}
 Пусть $\cf(l_1,l_2)\in\gA_1$ и пусть подпространство $l_1$ порождено вектором $(1, \alpha)$. Тогда $\cf(l_1, l_2)$ имеет собственную симметрию в том и только в том случае, если существует такое алгебраическое число $\omega$ степени $2$ со своим сопряжённым $\omega'$, что выполнено хотя бы одно из следующих условий:

  \textup{(а)} $(1, \alpha) \sim(1,\omega):\hskip 6.5mm \trace(\omega)=\omega + \omega' =0$;

  \textup{(б)} $(1, \alpha) \sim(1,\omega):\hskip 6.5mm \trace(\omega)=\omega + \omega' =1$.
  
\end{proposition}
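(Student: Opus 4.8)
The plan is to reduce the statement to the existence of one very specific involution in $\Gl_2(\Z)$ and then to normalise that involution using the change of basis that realises equivalence of continued fractions. By Proposition~\ref{prop:more_than_pelle_n_dim} we may take $l_1=\langle(1,\alpha)\rangle$ and $l_2=\langle(1,\alpha')\rangle$, where $\alpha'$ is the algebraic conjugate of $\alpha$; every palindromic symmetry $G$ satisfies $\sigma_G=(1\,2)$, i.e.\ it interchanges $l_1$ and $l_2$. In the eigenbasis $\{(1,\alpha),(1,\alpha')\}$ of $A$ such a $G$ is therefore antidiagonal, so $G^2=-\det(G)\cdot\mathrm{I}$. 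The first step is to observe that $G$ is \emph{proper} exactly when $\det G=-1$: if $\det G=+1$ then $G^2=-\mathrm{I}$ is a quarter-turn whose only fixed point is $\vec{0}$, which lies on no sail, so $G$ is improper; if $\det G=-1$ then $G^2=\mathrm{I}$ and $G$ has eigenvalues $\pm1$, its $(+1)$-eigenline is spanned by a primitive integer vector and hence crosses every sail it enters, yielding a genuine fixed point on a sail. Since a fixed point on a sail forces the real eigenvalue $1$, we conclude that a proper palindromic symmetry exists if and only if a palindromic symmetry of determinant $-1$ exists, and any such symmetry is an involution.

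For necessity, let $G$ be such a reflection. Its $(\pm1)$-eigenlines are spanned by primitive integer vectors $\vec{u},\vec{v}$, which are rational and therefore distinct from the irrational lines $l_1,l_2$, so $G$ truly interchanges $l_1$ and $l_2$. Completing $\vec{u}$ to a basis of $\Z^2$ brings $G$ to the shape $\begin{pmatrix}1&m\\0&-1\end{pmatrix}$, and replacing the complementary basis vector $\vec{w}$ by $\vec{w}+j\vec{u}$ changes $m$ into $m+2j$; hence $m$ is determined modulo $2$, and after this $\Gl_2(\Z)$ change of basis, i.e.\ after replacing $(1,\alpha)$ by an equivalent $(1,\omega)$, we have $m\in\{0,1\}$. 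If $m=0$ then $G(1,\omega)$ is proportional to $(1,-\omega)$, so $\omega'=-\omega$ and $\trace(\omega)=0$. If $m=1$ then $G(1,\omega)$ is proportional to $(1,-\omega/(1+\omega))$, whence $\omega+\omega'=-\omega\omega'$, i.e.\ $\trace(\omega)=-\norm(\omega)$; passing to the equivalent number $-1/\omega$ gives $\trace(-1/\omega)=-\trace(\omega)/\norm(\omega)=1$. In either case $(1,\alpha)\sim(1,\omega)$ with $\trace(\omega)\in\{0,1\}$.

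For sufficiency, note that possessing a proper palindromic symmetry is invariant under the equivalence $\sim$, so we may assume $\alpha=\omega$. If $\trace(\omega)=0$ then $l_2=\langle(1,-\omega)\rangle$ and $G=\begin{pmatrix}-1&0\\0&1\end{pmatrix}$ interchanges $l_1,l_2$; if $\trace(\omega)=1$ then $l_2=\langle(1,1-\omega)\rangle$ and $G=\begin{pmatrix}1&0\\1&-1\end{pmatrix}$ does so. In both cases $G\in\Gl_2(\Z)$, $\det G=-1$ and $G^2=\mathrm{I}$, so by the first step $G$ is a proper palindromic symmetry of $\cf(l_1,l_2)$.

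The main obstacle is the first step: correctly identifying the class of \emph{proper} symmetries with the determinant~$-1$ case. This rests on the geometric dichotomy between a reflection, whose rational fixed line necessarily meets a sail (because any ray from the origin into a simplicial cone pierces its sail exactly once), and a right-angle rotation, which fixes only the origin and therefore no point of any $\partial(\cK(C))$. Once this is secured, the normal-form classification by the residue $m\bmod 2$ and the two explicit constructions are routine, the only mild care being the passage $\omega\mapsto-1/\omega$ that converts the relation $\trace(\omega)=-\norm(\omega)$ into $\trace=1$.
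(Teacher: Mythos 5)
The paper does not actually prove Proposition \ref{two_dimension}: it is quoted as a classical result (Galois, Legendre, Perron, Kraitchik), with \cite{german_tlyust} cited for a geometric proof, so there is no in-paper argument to compare against. Your proof is correct and is precisely the two-dimensional instance of the scheme this paper runs for $n=4$: your criterion ``$G$ is proper $\Leftrightarrow \det G=-1$'' (i.e.\ $\mu_1\mu_2=1$ for the antidiagonal matrix of $G$ in the eigenbasis, with the rational $(+1)$-eigenray piercing a sail in a fixed point) is the analogue of Lemma \ref{prod_lemma}; your normal form $\bigl(\begin{smallmatrix}1&m\\0&-1\end{smallmatrix}\bigr)$ with $m$ reduced modulo $2$ plays the role of Lemmas \ref{rational_subspace_2_2} and \ref{main_lem_2}; and the explicit reflections in your sufficiency step are the counterparts of the matrices $G_i$ in Lemma \ref{oper_eq_4d_2}. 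All steps check out, including the final substitution $\omega\mapsto-1/\omega$ (realized by $\bigl(\begin{smallmatrix}0&1\\-1&0\end{smallmatrix}\bigr)$) that converts the relation $\trace(\omega)=-\norm(\omega)$ arising in the case $m=1$ into condition \textup{(б)}.
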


\begin{proposition}\label{three_dimension}
 Пусть $\cf(l_1,l_2,l_3)\in\gA_2$ и пусть подпространство $l_1$ порождено вектором $(1, \alpha, \beta)$. Тогда $\cf(l_1, l_2, l_3)$ имеет собственную симметрию в том и только в том случае, если существует такое алгебраическое число $\omega$ степени $3$ со своими сопряжёнными $\omega'$ и $\omega''$, что выполнено хотя бы одно из следующих условий:

  \textup{(а)} $(1, \alpha, \beta)\sim(1, \omega, \omega'):\hskip 6.5mm \trace(\omega)=\omega + \omega' + \omega'' =0$;

  \textup{(б)} $(1, \alpha, \beta)\sim(1, \omega, \omega'):\hskip 6.5mm \trace(\omega)=\omega + \omega' + \omega'' =1$.

\noindent При выполнении утверждения \textup{(а)} или \textup{(б)} кубическое расширение $\Q(\alpha, \beta)$ будет нормальным.
\end{proposition}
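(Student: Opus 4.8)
The plan is to translate the existence of a proper symmetry into a purely arithmetic statement about the cubic field $K=\Q(\alpha,\beta)$ through the centralizer of the associated operator, and then make both implications explicit. Let $A\in\Sl_3(\Z)$ be the hyperbolic operator with $\cf(A)=\cf(l_1,l_2,l_3)$ and eigenvector $(1,\alpha,\beta)$ (Proposition \ref{prop:more_than_pelle_n_dim}); then $\Q[A]$ is the full centralizer of $A$ in $M_3(\Q)$ and is isomorphic to $K$. For any $G\in\textup{Sym}_\Z(\cf(A))$ the matrix $GAG^{-1}$ has the same eigenlines as $A$, hence is diagonal in the eigenbasis and lies in $\Q[A]$; thus conjugation by $G$ is a $\Q$-algebra automorphism of $K$, i.e. an element of $\gal(K/\Q)$, nontrivial exactly when $\sigma_G\ne\id$. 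Since a totally real cubic has nontrivial automorphisms only when it is normal, this already yields the final assertion: a palindromic symmetry forces $K$ to be normal (cyclic) and $\sigma_G$ to be a $3$-cycle. Writing $G$ in the eigenbasis as a scaled cyclic permutation, I would compute that its eigenvalues are $\{d,\zeta_3 d,\zeta_3^2 d\}$ with $d^3=\det G=\pm1$, so after replacing $G$ by a suitable power and sign its characteristic polynomial is $x^3-1$, the eigenvalue $1$ occurs, and its eigenvector spans a rational line. Because the walls $\spanned(l_i,l_j)$ are irrational, any rational line meets the interior of some cone and hence crosses a sail; thus for a $3$-cycle symmetry properness is equivalent to having $1$ as an eigenvalue, which such a representative does. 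This reduces everything to operators $G$ with characteristic polynomial $x^3-1$.

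For the (easier) reverse implication I would argue constructively. Suppose $(1,\alpha,\beta)\sim(1,\omega,\omega')$ with $\omega'=\rho\omega$ a conjugate of $\omega$, where $\rho$ generates $\gal(K/\Q)$, and $\trace\omega=t\in\{0,1\}$. In the basis $\{1,\omega,\rho\omega\}$ of $K$ the Galois shift, defined by $1\mapsto1$, $\omega\mapsto\rho\omega$, $\rho\omega\mapsto\rho^2\omega=t-\omega-\rho\omega$, is represented by the integer matrix with columns $(1,0,0)$, $(0,0,1)$, $(t,-1,-1)$. A direct computation gives characteristic polynomial $x^3-1$, determinant $1$, and fixed direction equal to that of $1\in K$, i.e. the central ray $(1,1,1)$ in eigencoordinates, which lies in the interior of the totally positive cone and meets its sail. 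Since this matrix cyclically permutes the eigenlines it satisfies \eqref{eq:repres}, hence is a symmetry, and it is proper palindromic. In fact the construction works for every integer $t$, which is the point of the trace normalization below.

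For the forward implication I would start from a proper symmetry $G$ with characteristic polynomial $x^3-1$ and exploit the $\Z[G]\cong\Z[x]/(x^3-1)$-module structure of $\Z^3$. Splitting off the primitive integer generator $v_0$ of the rational fixed axis, the complementary part is a rank-one module over $\Z[\zeta_3]$, so there is $v_1$ with $\{v_0,v_1,Gv_1\}$ a $\Z$-basis; in this basis $G$ takes the normal form above and the eigenvector $(1,\alpha,\beta)$ acquires the cyclic shape $(1,\omega,\rho\omega)$, giving $(1,\alpha,\beta)\sim(1,\omega,\omega')$ with $\omega'$ a genuine conjugate of $\omega$. Finally I would normalize the trace: replacing $\omega$ by $\omega+k$ is an integral base change shifting $\trace\omega$ by $3k$, and replacing $\omega$ by $-\omega$ (the change $\mathrm{diag}(1,-1,-1)$) negates it; hence $\trace\omega$ may be brought, modulo $3$ and up to sign, into $\{0,1\}$, landing in case (а) or (б).

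I expect the main obstacle to be the integral representation-theoretic step in the forward direction: producing the cyclic $\Z$-basis $\{v_0,v_1,Gv_1\}$, and thereby the normal form, requires controlling the $\Z$-conjugacy classes of matrices with characteristic polynomial $x^3-1$ (equivalently the relevant module/ideal classes over $\Z[x]/(x^3-1)$) and checking that each such class is captured by one of these standard matrices. The complementary subtlety is to verify that the two trace residues surviving the normalization, namely $0$ and $\pm1$, are the only ones and that both actually occur, so that conditions (а) and (б) together are exhaustive and non-redundant.
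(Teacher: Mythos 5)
First, note that this paper does not actually prove Proposition \ref{three_dimension}: it is quoted from \cite{german_tlyust_2}, and the closest analogue inside the present text is the geometric machinery of Sections \ref{permutation_properity_n_4}--\ref{proof_theorem_proper} for $n=4$ (descent on parallelograms, explicit enumeration of lattice-point configurations, explicit matrices $G_i$). Your route is genuinely different and essentially algebraic. Most of it is sound: the centralizer argument showing that a palindromic symmetry induces a nontrivial automorphism of $K$, hence forces $K$ to be cyclic and $\sigma_G$ to be a $3$-cycle (a transposition would give a rational eigenline, as in Lemma \ref{rational_eigen}); the observation that the characteristic polynomial of a cyclic symmetry is $x^3-\det G$, so that a \emph{proper} one automatically has $x^3-1$ and properness is equivalent to the eigenvalue $1$ occurring (its rational eigenline misses the irrational walls, so it pins a $G$-invariant cone and a fixed point on its sail); the reverse direction; and the normalization of $\trace\omega$ modulo $3$ and sign to $\{0,1\}$. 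Two small repairs are needed: your explicit matrix is the transpose of the one that actually maps $(1,\omega,\rho\omega)^{\top}$ to $(1,\rho\omega,\rho^2\omega)^{\top}$ (the symmetry acts on the column of basis elements, not on coordinates), and in the forward direction you must first apply Hilbert 90 to $\mu$ (where $G\vec l_1=\mu\,\rho(\vec l_1)$, $\norm(\mu)=1$) so that the rescaled lattice $M=\Z\nu+\Z\nu\alpha+\Z\nu\beta$ becomes $\rho$-stable; without this, $\omega'$ does not come out as a genuine conjugate of $\omega$.

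The one genuine gap --- which you flag yourself --- is the phrase ``splitting off the primitive integer generator $v_0$ \dots the complementary part is a rank-one module over $\Z[\zeta_3]$''. As a $\Z[C_3]$-lattice, $\Z^3$ need \emph{not} decompose as $\Z v_0\oplus(\text{rank-}2)$: by the classification of $\Z[C_3]$-lattices it is either $\Z\oplus\Z[\zeta_3]$ or the indecomposable regular lattice $\Z[C_3]$, and the indecomposable case is precisely the one that produces $\trace\omega\equiv\pm1\pmod 3$, i.e.\ case (б). Taken literally, your argument therefore only reaches case (а) and misses half of the statement. The correct move is not to split but to quotient: after rescaling so that $M$ is $\rho$-stable and $M\cap\Q=\Z$ (possible since $\trace(M)\neq 0$), the quotient $M/\Z$ is a torsion-free rank-one module over $\Z[x]/(x^2+x+1)=\Z[\zeta_3]$, hence free because $\Z[\zeta_3]$ is a principal ideal domain; lifting a generator $\omega$ gives the basis $\{1,\omega,\rho\omega\}$ of $M$ in \emph{both} module classes, and $\rho^2\omega\in M$ then forces $\trace\omega\in M\cap\Q=\Z$ automatically. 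With that substitution (and the transpose and Hilbert-90 points above) your outline closes into a complete proof; the ``non-redundancy'' of (а) and (б) you worry about at the end is not actually required by the statement, only exhaustiveness is.
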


В этих формулировках $\vec v_1\sim\vec v_2$ для векторов из $\R^n$ означает существование такого оператора $X\in\Gl_n(\Z)$ и такого ненулевого $\mu\in\R$, что $X\vec v_1=\mu\vec v_2$.

\section{Формулировка основного результата}\label{results}

Основным результатом данной работы является следующая

 \begin{theorem}\label{theorem_proper}
  Пусть $\cf(l_1,l_2,l_3,l_4)\in\gA_3$ и пусть подпространство $l_1$ порождено вектором $(1, \alpha, \beta, \gamma)$. Пусть $K = \Q(\alpha, \beta, \gamma)$ и $\sigma_1(=\id), \sigma_2, \sigma_3, \sigma_4$ --- все вложения поля $K$ в $\R$ (см. предложение \ref{prop:more_than_pelle_n_dim}). Тогда $\cf(l_1, l_2, l_3, l_4)$ имеет собственную палиндромическую симметрию в том и только в том случае, если (с точностью до перестановки индексов) выполняется
\[ \sigma_3(K) = K, \quad \sigma_4(K) = \sigma_2(K), \quad \sigma_{3}^{2} = \sigma_1 = \id, \quad \sigma_4 = \sigma_2 \sigma_3\]
и существуют такие алгебраические числа $\omega$ и $\psi$ степени $4$, принадлежащие полю $K$, что выполнено хотя бы одно из следующих условий:
  
  \textup{(1)} $(1, \alpha, \beta, \gamma)\sim(1, \omega,  \psi, \omega'):\hskip 14.5mm \psi + \psi'= - (\omega + \omega')$;
  
  \textup{(2)} $(1, \alpha, \beta, \gamma)\sim(1, \omega,  \psi, \omega'):\hskip 14.5mm \psi + \psi'= 1 - (\omega + \omega')$;
  
  \textup{(3)} $(1, \alpha, \beta, \gamma)\sim(1, \omega,  \psi, \omega'):\hskip 14.5mm \psi + \psi'= 2 - (\omega + \omega')$;
  
  \textup{(4)} $(1, \alpha, \beta, \gamma)\sim(1, \omega,  \psi, \frac{\omega + \omega'}{2}):\hskip 10.2mm \psi + \psi'= - (\omega + \omega')$;
  
  \textup{(5)} $(1, \alpha, \beta, \gamma)\sim(1, \omega,  \psi, \frac{\omega + \omega'}{2}):\hskip 10.2mm \psi + \psi'= 2 - (\omega + \omega')$;
  
  \textup{(6)} $(1, \alpha, \beta, \gamma)\sim(1, \omega,  \psi,  \frac{\omega + \omega' + 1}{2}):\hskip 6.5mm \psi + \psi'= - (\omega + \omega')$;
  
  \textup{(7)} $(1, \alpha, \beta, \gamma)\sim(1, \omega,  \psi, \frac{\omega + \omega' + 1}{2}):\hskip 6.5mm \psi + \psi'= 2 - (\omega + \omega')$;
  
  \textup{(8)} $(1, \alpha, \beta, \gamma)\sim(1, \omega,  \psi, \frac{\omega + \omega'}{2}):\hskip 10.2mm \psi + \psi'= 1 - \frac{\omega + \omega'}{2}$;
  
  \textup{(9)} $(1, \alpha, \beta, \gamma)\sim(1, \omega,  \psi, \frac{\omega + \omega'}{2}):\hskip 10.2mm \psi + \psi'= 2 - \frac{\omega + \omega'}{2}$;
  
  \textup{(10)} $(1, \alpha, \beta, \gamma)\sim(1, \omega,  \psi, \frac{\omega' - \omega}{4}):\hskip 8.2mm \psi + \psi'= 2 - \frac{\omega + \omega'}{2}$,

\noindent где $\omega' = \sigma_3(\omega), \psi' = \sigma_3(\psi)$.
  
\end{theorem}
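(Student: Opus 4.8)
The plan is to move to the common eigenbasis and convert everything into integral linear algebra. By Proposition \ref{prop:more_than_pelle_n_dim} we may write $l_i=\langle\vec v_i\rangle$ with $\vec v_i=(1,\sigma_i(\alpha),\sigma_i(\beta),\sigma_i(\gamma))$, and $\vec v_1,\dots,\vec v_4$ is simultaneously an eigenbasis of $A$ and of every Dirichlet symmetry. Any symmetry $G$ then acts by $G\vec v_i=\lambda_i\vec v_{\pi(i)}$ with $\pi=\sigma_G$ and $\lambda_i\in\R^\times$. I would first record the geometric reformulation of properness: since $G$ is linear and permutes the $16$ cones, a fixed point on a sail is the same as an eigenvector with eigenvalue $1$ lying in the open cone $C$; such an eigenvector determines $C$ uniquely, forcing $G(C)=C$, and conversely any eigenvalue-$1$ eigenvector in an open cone meets that cone's sail in a genuine fixed point. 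Thus \emph{proper palindromic} is equivalent to: $\pi\neq\id$ and $G$ has a $+1$-eigenvector $\vec x_0=\sum_i c_i\vec v_i$ with all $c_i\neq 0$.

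Next I would pin down the permutation and the arithmetic of $K$. Because $GAG^{-1}$ is diagonal in the basis $\vec v_i$, it commutes with $A$ and hence is multiplication by an element of $K$; matching its eigenvalues $\sigma_{\pi^{-1}(j)}(\mu)$ (and those obtained from all Dirichlet symmetries) against single elements of $K$ forces $\pi$ to be induced by a field automorphism $\tau$ of $K$, namely $\sigma_{\pi(i)}=\sigma_i\circ\tau$. Since a fixed index $\pi(i)=i$ would give $\tau=\id$ and hence $\pi=\id$, the permutation $\pi$ is a fixed-point-free involution; relabelling so that its cycles are $\{1,3\}$ and $\{2,4\}$ we obtain exactly $\tau=\sigma_3$ with $\sigma_3^2=\id$, $\sigma_3(K)=K$, $\sigma_4=\sigma_2\sigma_3$ and $\sigma_4(K)=\sigma_2(K)$. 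This is the ``dihedral'' situation in which $K$ is a non-Galois quartic carrying a single order-two automorphism, which is the first part of Theorem \ref{theorem_proper}.

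With $\pi=(1\,3)(2\,4)$ fixed I would normalize $G$ to a finite-order representative. On each block the restriction of $G$ has eigenvalues $\pm\sqrt{\lambda_1\lambda_3}$ and $\pm\sqrt{\lambda_2\lambda_4}$; the presence of the $+1$-eigenvector forces $\lambda_1\lambda_3=\lambda_2\lambda_4=1$, so after multiplying by a suitable Dirichlet symmetry $G$ becomes an involution with two-dimensional $\pm1$-eigenspaces. Writing $l_1=\langle(1,\omega,\psi,\omega')\rangle$ adapted to $\tau$ (so that $\omega'=\sigma_3(\omega)$, $\psi'=\sigma_3(\psi)$, and $\tau$ carries this vector to a generator of $l_3$), the task becomes to classify, up to $\Gl_4(\Z)$-conjugacy respecting the cone, the integral involutions realizing $\pi$ that possess such a fixed vector. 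These are governed by how the $(+1)$- and $(-1)$-eigenlattices sit inside $\Z^4$: a split decomposition, an index-two glue, or an index-four glue. Each pattern translates into one linear normalization of the fourth coordinate ($\gamma$ equal to $\omega'$, to $\tfrac{\omega+\omega'}{2}$, to $\tfrac{\omega+\omega'+1}{2}$, or to $\tfrac{\omega'-\omega}{4}$) together with an integrality relation on the trace-type invariant $\omega+\omega'+\psi+\psi'\in F=K^{\tau}$, producing precisely the relations $\psi+\psi'=c-(\omega+\omega')$ and $\psi+\psi'=c-\tfrac{\omega+\omega'}{2}$ of cases \textup{(1)}--\textup{(10)}; this mirrors the $\trace\in\{0,1\}$ dichotomy of Proposition \ref{two_dimension}. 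For the converse I would, in each case, build the integer matrix $G$ from the normal form and invoke the ray-meets-sail lemma of the first paragraph to certify the fixed point.

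The main obstacle I expect is the classification step together with its converse: proving that the list of glue patterns is \emph{complete and non-redundant}, and that it matches the ten arithmetic normal forms with no overlaps or omissions. Concretely, the delicate points are (i) keeping track of the signs of the $\lambda_i$ so that $G$ genuinely preserves a cone rather than merely permuting the four lines, which determines which half- or quarter-integer shift of the fourth coordinate is admissible, and (ii) for each sufficiency direction, verifying that the constructed $G$ is unimodular and that its $+1$-eigenvector can be placed in the interior of some cone $C\in\mathcal C(l_1,\dots,l_4)$. The half- and quarter-integer cases \textup{(4)}--\textup{(10)}, where the invariant lattice is not a direct summand, are where this bookkeeping is hardest and where the finer arithmetic of $F$ enters.
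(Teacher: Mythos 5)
Your overall strategy runs parallel to the paper's (reduce to an integral involution swapping $l_1\leftrightarrow l_3$, $l_2\leftrightarrow l_4$ with $\mu_1\mu_3=\mu_2\mu_4=1$, then classify integral normal forms and translate them into relations on $\omega,\psi$), and your derivation of the field-theoretic conditions via the centralizer $\Q[A]\cong K$ is a clean alternative to the characteristic-polynomial case analysis of Lemma \ref{ord_3}. But there are two genuine gaps. First, ``fixed-point-free'' does not imply ``involution'': the automorphism $\tau$ inducing $\sigma_G$ may have order $4$, in which case $K$ is cyclic Galois and $\sigma_G$ is a $4$-cycle --- this case really occurs and is the subject of \cite{tlyust}. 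The paper spends Corollary \ref{all_to_2_2}, Lemma \ref{prod_lemma} and Corollary \ref{property_ord_eq} on passing to $G'=G^2$ and, crucially, on showing that properness of $G^2$ implies properness of the original $G$ (via the criterion $\mu_1\mu_2\mu_3\mu_4=1$ for $4$-cycles); without this step a classification of involutions does not decide properness of a $4$-cycle symmetry. Relatedly, your gloss that one lands in a ``non-Galois dihedral'' situation over-restricts: the stated conditions on the $\sigma_i$ are also satisfied when $K$ is Galois.

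Second, and more seriously, the heart of the theorem --- completeness of the list of ten arithmetic normal forms --- is exactly the step you defer. The invariant you propose (split / index-two / index-four glue of the $\pm1$-eigenlattices) is too coarse: it distinguishes only three types of integral involutions, whereas ten inequivalent cases must be produced. The missing information is the position, relative to $\Z^4$, of a canonical quadruple of lattice points $\vec{z}_1,\dots,\vec{z}_4$ swapped in pairs by $F$ and chosen minimally near the fixed plane; the paper obtains these by a descent on the triple of parallelograms $(\Delta^{\pi},\Delta^{Q},\Delta^{R})$ in Lemma \ref{main_lem_2}, enumerates eleven admissible configurations of half- and quarter-integral points, and only then conjugates each configuration onto one of the matrices $G_1,\dots,G_{10}$ (Lemma \ref{oper_eq_4d_2} and Section \ref{proof_theorem_proper}). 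As written, your proposal asserts the outcome of this enumeration rather than proving it, so the ``only if'' direction is not established; the sufficiency direction, by contrast, is outlined adequately and matches the paper's verification that each $G_i$ is a proper symmetry of every fraction in $\mathbf{CF}_i$.
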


В работе \cite{tlyust} исследуются \emph{циклические} симметрии, то есть такие симметрии $\cf(l_1,l_2,l_3,l_4)\in\gA_3$, которые циклически переставляют направления $l_1,l_2,l_3,l_4$. А именно, в этой работе доказывается, что цепная дробь $\cf(l_1,l_2,l_3,l_4)\in\gA_3$ имеет собственную циклическую симметрию тогда и только тогда, когда $K$ --- циклическое расширение Галуа, $\gal(K/\Q) = \langle \sigma_{2} \rangle$ и выполняется один из пунктов (1) - (7) теоремы \ref{theorem_proper} для $\psi = \sigma_2(\omega)$. Следующее утверждение показывает, что не всякая палиндромичная цепная дробь обладает собственными циклическими симметриями:

\begin{proposition}\label{proper_but_not_cyclic}
Существуют такие вещественные числа $\alpha$, $\beta$, $\gamma$, что подпространство $l_1$ порождено вектором $(1, \alpha, \beta, \gamma)$, вполне вещественное расширение $K=\Q(\alpha,\beta, \gamma)$ поля $\Q$ не является нормальным и $\cf(l_1,l_2,l_3,l_4)\in\gA_{3}$ --- палиндромичная цепная дробь, не обладающая собственными циклическими симметриями.
\end{proposition}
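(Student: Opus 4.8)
The plan is to produce an explicit example rather than an abstract existence argument. By Proposition~\ref{three_dimension}'s companion (Theorem~\ref{theorem_proper}), the existence of a proper palindromic symmetry forces the Galois-theoretic constraints $\sigma_3(K)=K$, $\sigma_4(K)=\sigma_2(K)$, $\sigma_3^2=\id$, $\sigma_4=\sigma_2\sigma_3$, together with one of the ten normalization conditions. A proper \emph{cyclic} symmetry, by the result of \cite{tlyust}, additionally requires $K$ to be a cyclic Galois extension with $\gal(K/\Q)=\langle\sigma_2\rangle$. So the whole task reduces to exhibiting a single quartic field $K$ and a generating vector $(1,\alpha,\beta,\gamma)$ for which the first (palindromic) package of conditions holds but $K$ fails to be normal over $\Q$ — since non-normality immediately rules out any cyclic symmetry.

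First I would look for a totally real quartic field $K$ whose Galois closure has group $D_4$ (the dihedral group of order $8$), rather than $C_4$ or $V_4$. A $D_4$-quartic is the natural candidate: it is non-normal (so no cyclic symmetry can exist), yet it contains a quadratic subfield and admits an order-$2$ embedding $\sigma_3$ fixing $K$ setwise, which is exactly the structure Theorem~\ref{theorem_proper} demands. Concretely I would take $K=\Q(\theta)$ where $\theta$ is a root of an irreducible totally real quartic $f(x)=x^4 - a x^2 + b$ with the discriminant pattern that produces group $D_4$ (this biquadratic-type form has an obvious involution $\theta\mapsto-\theta$, which will serve as $\sigma_3$, while the field is generally not normal because $\sqrt{\cdot}$ of the relevant quantities need not lie in $K$). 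One must choose $a,b\in\Z$ so that all four roots are real (total reality) and so that the splitting field is degree $8$, not $4$.

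Next I would set $\omega=\theta$ (or a suitable element of $K$) and verify one of the ten conditions of Theorem~\ref{theorem_proper}, choosing $\psi\in K$ to satisfy the required trace relation $\psi+\psi'=c-(\omega+\omega')$ for the appropriate constant $c$; since $\sigma_3:\theta\mapsto-\theta$ gives $\omega+\omega'=\theta+(-\theta)=0$ for $\omega=\theta$, condition~(1) with $\psi+\psi'=0$ is satisfiable by taking $\psi$ to be any element of $K$ anti-symmetric under $\sigma_3$, e.g. an odd power of $\theta$. Then I would define $\alpha,\beta,\gamma$ by setting $(1,\alpha,\beta,\gamma)$ equivalent (under $\Gl_4(\Z)$ scaling) to $(1,\omega,\psi,\omega')=(1,\theta,\psi,-\theta)$, check that $1,\alpha,\beta,\gamma$ is indeed a basis of $K$ so that Proposition~\ref{prop:more_than_pelle_n_dim} applies and $l_1,\ldots,l_4$ are eigendirections of a genuine hyperbolic operator, and finally confirm that $K$ is not normal. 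The palindromicity then follows from Theorem~\ref{theorem_proper}, while the absence of cyclic symmetries follows from non-normality via \cite{tlyust}.

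The main obstacle I anticipate is the simultaneous satisfaction of three somewhat competing requirements: total reality of all four roots, the precise $D_4$ (non-normal) Galois structure, and the compatibility of the chosen $\omega,\psi$ with the equivalence $(1,\alpha,\beta,\gamma)\sim(1,\omega,\psi,\omega')$ under an \emph{integer} $\Gl_4(\Z)$ transformation. In particular I expect that verifying non-normality and pinning down the Galois group requires a careful discriminant or resolvent-cubic computation, and that checking the integrality of the change-of-basis matrix is where the bookkeeping is most delicate. A clean way to sidestep part of this is to start from the abstract field-theoretic data — an involution $\sigma_3$ with fixed field a real quadratic field that is itself not Galois-stable in the full closure — and only afterward realize it by an explicit polynomial, so that the non-normality is guaranteed by construction rather than verified ad hoc.
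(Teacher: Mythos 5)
Your overall route coincides with the paper's: take a totally real quartic $K=\Q(\theta)$ of biquadratic shape $x^4-ax^2+b$ whose Galois closure has degree $8$ (the paper uses $x^4-8x^2+14$, $\theta=\sqrt{4+\sqrt{2}}$), use the involution $\theta\mapsto-\theta$ as $\sigma_3$, verify condition (1) of Theorem \ref{theorem_proper}, and derive the absence of proper cyclic symmetries from non-normality via \cite{tlyust}. However, your concrete instantiation of $\omega$ and $\psi$ fails. If $\omega=\theta$ and $\sigma_3(\theta)=-\theta$, then $\omega'=-\omega$, so the four numbers $1,\omega,\psi,\omega'$ are linearly dependent over $\Q$ and span a $\Q$-subspace of dimension at most $3$. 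Since $1,\alpha,\beta,\gamma$ must be a $\Q$-basis of $K$ (Proposition \ref{prop:more_than_pelle_n_dim}) and any $X\in\Gl_4(\Z)$ preserves the dimension of the $\Q$-span of the coordinates, the equivalence $(1,\alpha,\beta,\gamma)\sim(1,\omega,\psi,\omega')$ is then impossible. The same defect is built into your choice of $\psi$: taking $\psi$ antisymmetric under $\sigma_3$ gives $\psi+\psi'=0$, and condition (1) then forces $\omega+\omega'=0$, i.e.\ $\omega'=-\omega$, returning you to the degenerate situation. So within condition (1) you must choose $\omega$ that is neither symmetric nor antisymmetric under $\sigma_3$, and $\psi$ with nonzero trace over the fixed field of $\sigma_3$.

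The paper's fix is exactly this: it sets $\omega=\theta+\theta^2$, so that $\omega''=\sigma_3(\omega)=-\theta+\theta^2$ and $\omega+\omega''=2\theta^2\neq0$, and $\psi=-\theta^2+\frac{1}{2}\theta^3$, so that $\psi+\psi''=-2\theta^2=-(\omega+\omega'')$, and it checks that $1,\omega,\psi,\omega''$ is a $\Q$-basis of $K$; one then simply takes $(\alpha,\beta,\gamma)=(\omega,\psi,\omega'')$, so the $\Gl_4(\Z)$-integrality bookkeeping you worry about disappears (the witnessing matrix is the identity). Finally, the non-normality check that you defer to ``a discriminant or resolvent-cubic computation'' is carried out in the paper by an elementary argument: if $\Q(\theta_1)=\Q(\theta_2)$ with $\theta_2=\sqrt{4-\sqrt{2}}$, then $K$ is a normal quadratic extension of $\Q(\sqrt{2})$ and any element of $\gal\big(K/\Q(\sqrt{2})\big)$ fixes $\theta_1\theta_2=\sqrt{14}$, forcing $\sqrt{14}\in\Q(\sqrt{2})$, a contradiction. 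You need to supply some such verification explicitly; without it, and without a corrected choice of $\omega,\psi$, the example is not yet established.
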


Любопытно, что критерий наличия собственных циклических симметрий не следует непосредственно из теоремы \ref{theorem_proper}. В связи с этим возникает естественный 

\vskip 2.2mm
\textsc{Вопрос.}
  \textit{
 Верно ли, что существует такая палиндромичная алгебраическая цепная дробь $\cf(l_1,l_2,l_3,l_4)$, для которой поле $K$ является циклическим расширением Галуа и у которой не существует собственных циклических палиндромических симметрий?}
 
Оставшаяся часть статьи имеет следующую структуру: в параграфе \ref{permutation_properity_n_4} мы анализируем то, как у собственных симметрий трехмерных цепных дробей устроены собственные подпространства и перестановки из соотношения (\ref{eq:repres}); в параграфе \ref{geom_proper_n_4} мы изучаем геометрию трехмерных цепных дробей, обладающих собственными симметриями; в параграфе \ref{matrix_and_algebraicity} мы устанавливаем связь между определенными классами цепных дробей и матрицами их собственных симметрий; параграф \ref{proof_theorem_proper} посвящен доказательству теоремы \ref{theorem_proper}; наконец, в параграфе \ref{proof_proper_but_not_cyclic} мы доказываем предложение \ref{proper_but_not_cyclic}.

\section{Собственные симметрии и собственные подпространства}\label{permutation_properity_n_4}
 Если задана дробь $\cf(l_1, \ldots, l_n)=\cf(A)\in\gA_{n-1}$, будем считать, что подпространство $l_1$ порождается вектором  $\vec l_1=(1,\alpha_1, \dots, \alpha_{n-1})$ (данное допущение корректно в силу предложения \ref{prop:more_than_pelle_n_dim}). Тогда из предложения \ref{prop:more_than_pelle_n_dim} следует, что числа $1,\alpha_1, \dots, \alpha_{n-1}$ образуют базис поля $K=\Q(\alpha_1, \dots, \alpha_{n-1})$ над $\Q$ и каждое $l_i$ порождается вектором $\vec l_i=(1,\sigma_i(\alpha_1), \dots, \sigma_i(\alpha_{n-1}))$, где $\sigma_1(=\id),\sigma_2, \dots, \sigma_n$ --- все вложения $K$ в $\R$. Заметим, что верна следующая 
\begin{lemma}\label{rational_eigen}
  Пусть $G\in\textup{Sym}_{\Z}\big(\cf(A)\big)$ и $\cf(A)=\cf(l_1, \ldots, l_n)$. Пусть $G \neq \pm I_{n}$ и $G(\vec{l}_1) = \lambda \vec{l}_1$. Тогда  $\lambda \notin \Q$.
  \end{lemma}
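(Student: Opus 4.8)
The plan is to argue by contradiction: assume $\lambda \in \Q$ and show that this forces $G = \pm I_n$. The key idea is that the eigenrelation $G(\vec{l}_1) = \lambda \vec{l}_1$ can be transported from $\vec{l}_1$ to \emph{all} the other eigenvectors $\vec{l}_i$ by applying the embeddings $\sigma_i$ componentwise, and that this is legitimate precisely because $G$ has integer entries and (under the assumption) $\lambda$ is fixed by every $\sigma_i$.

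First I would recall from the discussion preceding the lemma, together with Proposition \ref{prop:more_than_pelle_n_dim}, that $\vec{l}_i = (1, \sigma_i(\alpha_1), \dots, \sigma_i(\alpha_{n-1}))$, so that $\vec{l}_i$ is obtained from $\vec{l}_1$ by applying $\sigma_i$ to each coordinate. Writing the vector identity $G(\vec{l}_1) = \lambda \vec{l}_1$ row by row gives, for each $m$, an equality in the field $K$:
\[ \sum_{j} G_{mj} (\vec{l}_1)_j = \lambda (\vec{l}_1)_m, \]
where I use that $\lambda \in \Q \subseteq K$. Applying $\sigma_i$ to this identity and using $\sigma_i(G_{mj}) = G_{mj}$ (integer entries), $\sigma_i(\lambda) = \lambda$ (rationality), and $\sigma_i\big((\vec{l}_1)_j\big) = (\vec{l}_i)_j$, I obtain $G(\vec{l}_i) = \lambda \vec{l}_i$ for every $i = 1, \dots, n$.

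Next, since $\vec{l}_1, \dots, \vec{l}_n$ are eigenvectors of the hyperbolic operator $A$ for pairwise distinct real eigenvalues, they form a basis of $\R^n$. An operator acting as multiplication by the single scalar $\lambda$ on a basis must coincide with $\lambda I_n$, so $G = \lambda I_n$. Finally, $G \in \Gl_n(\Z)$ forces $\det G = \lambda^n = \pm 1$; together with $\lambda \in \Q$ this yields $\lambda = \pm 1$, hence $G = \pm I_n$, contradicting the hypothesis $G \neq \pm I_n$. Therefore $\lambda \notin \Q$.

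I do not expect a serious obstacle here. The only point needing care is the justification that the embeddings may be applied termwise to the eigenvalue equation, which rests exactly on $G$ having rational integer entries and on $\lambda$ being $\sigma_i$-invariant. Dropping the rationality assumption on $\lambda$ breaks this transport step, which is precisely why $\lambda \notin \Q$ is the sharp conclusion rather than something stronger.
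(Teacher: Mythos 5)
Your proof is correct, but it takes a different route from the paper's. The paper argues more directly: assuming $\lambda\in\Q$, it first notes that $G-\lambda I_n$ cannot be the zero matrix (otherwise $G=\lambda I_n$ with $\lambda\in\Z$ and $\det G=\pm1$ would force $G=\pm I_n$), and then observes that any nonzero row of the rational matrix $G-\lambda I_n$, applied to $(G-\lambda I_n)\vec l_1=\vec 0$, produces a nontrivial $\Q$-linear relation among $1,\alpha_1,\dots,\alpha_{n-1}$ --- impossible, since by Proposition~\ref{prop:more_than_pelle_n_dim} these numbers form a $\Q$-basis of the degree-$n$ field $K$. You instead transport the eigenrelation through the embeddings $\sigma_i$ to conclude $G\vec l_i=\lambda\vec l_i$ for all $i$, hence $G=\lambda I_n=\pm I_n$. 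Both arguments rest on Proposition~\ref{prop:more_than_pelle_n_dim}, but they use different parts of it: the paper needs only the $\Q$-linear independence of the coordinates of the single vector $\vec l_1$, while you use the explicit description of all the conjugate eigenvectors. The paper's version is slightly more economical; yours has the minor virtue of making explicit where rationality of $\lambda$ enters (it must be fixed by every $\sigma_i$ for the transport to work), which is indeed the sharp point of the lemma. All steps in your argument check out: the coordinates of $\vec l_1$ lie in $K$, the entries of $G$ are $\sigma_i$-invariant integers, the $\vec l_i$ span $\R^n$ by hypothesis on the $l_i$, and $\lambda^n=\pm1$ with $\lambda\in\Q$ gives $\lambda=\pm1$.
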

\begin{proof}
Предположим, что $\lambda \in \Q$. Поскольку $G \in \Gl_{n}(\Z)$ и $G \neq \pm I_{n}$, то $\textup{rank} (G - \lambda I_{n}) > 0$. Так как $(G - \lambda I_{n})(\vec{l}_1) = \vec{0}$, то какие-то числа из набора $1,\alpha_1,\ldots,\alpha_{n-1}$ выражаются через оставшиеся числа этого набора в виде некоторой линейной комбинации с коэффициентами из $\Q$. В силу предложения \ref{prop:more_than_pelle_n_dim} получаем противоречие.

\end{proof}

Отныне будем считать, что $n=4$, то есть будем рассматривать трехмерные цепные дроби. Напомним также, что для каждого $G\in\textup{Sym}_{\Z}\big(\cf(A)\big)$ соотношением \ref{eq:repres} определена перестановка $\sigma_G$. 

\begin{lemma}\label{ord_3}
  Пусть $G$ --- палиндромическая симметрия $\cf(l_1,l_2,l_3, l_4)\in\gA_3$, ассоциированной с (гиперболическим) оператором $A$.
  Тогда существует такая нумерация подпространств $l_1, l_2, l_3, l_4$, что $\sigma_{G} = (1, 2)(3, 4)$ или $\sigma_{G} = (1, 2, 3, 4)$.
\end{lemma}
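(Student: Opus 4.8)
The plan is to determine the cycle type of $\sigma_G$ from the single fact that $G$ is a matrix with rational entries whose four eigenlines are spanned by the conjugate vectors $\vec{l}_i=(1,\sigma_i(\alpha),\sigma_i(\beta),\sigma_i(\gamma))$, $i=1,2,3,4$. Concretely, I would reduce the whole statement to one claim: a palindromic $G$, i.e.\ one with $\sigma_G\neq\id$, fixes none of the four eigenlines. Granting this, the lemma is immediate, because the only permutations of a four-element set without fixed points are the products of two disjoint transpositions and the $4$-cycles; renumbering $l_1,l_2,l_3,l_4$ then brings $\sigma_G$ to the form $(1,2)(3,4)$ or $(1,2,3,4)$.

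To prove that claim I would first record the elementary computation underlying it. If $\sigma_G$ fixes an index $j$, then $G(l_j)=l_j$, so $\vec{l}_j$ is an eigenvector, $G\vec{l}_j=\lambda_j\vec{l}_j$; reading off the first coordinate gives $\lambda_j=(G\vec{l}_j)_1$, and since the coordinates of $\vec{l}_j$ lie in the field $\sigma_j(K)$ and $G$ has integer entries, we get $\lambda_j\in\sigma_j(K)$. This is the same mechanism that drives Lemma~\ref{rational_eigen}.

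The core of the argument is that the embeddings intertwine the action of $G$ on different eigenlines. For any two indices $i,j$ the composite $\rho_{ij}:=\sigma_i\circ\sigma_j^{-1}$ is a field isomorphism $\sigma_j(K)\to\sigma_i(K)$ that fixes $\Q$ and carries $\vec{l}_j$ to $\vec{l}_i$ coordinatewise. Because $G$ is rational, $\rho_{ij}$ commutes with multiplication by $G$, so from $G\vec{l}_j=\lambda_j\vec{l}_j$ I obtain $G\vec{l}_i=\rho_{ij}(\lambda_j)\,\vec{l}_i$ for every $i$. Hence fixing a single eigenline forces $G$ to fix all of them, that is $\sigma_G=\id$, contradicting the assumption that $G$ is palindromic. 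This establishes that $\sigma_G$ is fixed-point-free, and the lemma follows.

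I expect the only delicate point to be the last step when $K/\Q$ is not normal: then the images $\sigma_i(K)$ are genuinely distinct subfields of $\R$, and one cannot treat the $\sigma_i$ as automorphisms of a single field. The correct bookkeeping is to invert $\sigma_j$ on its own image $\sigma_j(K)$ before re-embedding by $\sigma_i$, which is exactly what $\rho_{ij}$ does; once this is set up cleanly, both the first-coordinate computation and the final combinatorial classification of fixed-point-free permutations of four points are routine.
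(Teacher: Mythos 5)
Your proof is correct, and it takes a genuinely different and more conceptual route than the paper. The paper proceeds by a case analysis on the cycle type of $\sigma_G$: it rules out the type $(1)(2,3,4)$ by writing the characteristic polynomial of $G$ in the eigenbasis $\vec{l}_1,\vec{l}_2,\vec{l}_3,\vec{l}_4$ and concluding that the fixed eigenline would carry an integer eigenvalue $\pm 1$ and hence be rational, contradicting hyperbolicity; and it rules out the type $(1)(2)(3,4)$ by a longer computation (integrality of the coefficients of $\chi_G$, a divisibility argument giving $\mu_3\mu_4=\pm1$, and two applications of Lemma~\ref{rational_eigen} to force $G=\pm I_4$). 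Your single observation --- that conjugating the eigenvalue equation $G\vec{l}_j=\lambda_j\vec{l}_j$ by the field isomorphism $\rho_{ij}=\sigma_i\circ\sigma_j^{-1}$, which fixes $\Q$ and hence commutes with the integer matrix $G$ and carries $\vec{l}_j$ to $\vec{l}_i$ coordinatewise, turns one fixed eigenline into four --- subsumes both nontrivial cases at once and shows directly that $\sigma_G$ is fixed-point-free, whence after renumbering its cycle type is $(1,2)(3,4)$ or $(1,2,3,4)$. The one delicate point, that $\lambda_j$ must lie in $\sigma_j(K)$ so that $\rho_{ij}(\lambda_j)$ is meaningful even when $K/\Q$ is not normal, is exactly the point you address by reading off the first coordinate. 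Your argument is shorter, avoids the characteristic-polynomial bookkeeping, and generalizes verbatim to arbitrary dimension $n$ (the permutation of a palindromic symmetry never has a fixed point); the paper's computations, on the other hand, produce along the way explicit information about the eigenvalues $\mu_i$ in the excluded cases, though that information is not reused elsewhere.
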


\begin{proof}
  Случай $\sigma_{G} = \textup{id}$ невозможен в силу того, что оператор $G$ не является симметрией Дирихле $\cf(A)$.

 Предположим существует такая нумерация подпространств $l_1, l_2, l_3, l_4$, что $\sigma_{G} = (1)(2, 3, 4)$. Таким образом существуют такие вещественные числа $\mu_{1}$, $\mu_{2}$, $\mu_{3}$, $\mu_{4}$, что матрица оператора $G$ в базисе $\vec{l}_{1}, \vec{l}_{2}, \vec{l}_{3}, \vec{l}_{4}$ имеет вид
 \[
   \begin{pmatrix}
     \mu_{1} & 0 & 0 & 0\\
     0 & 0 & 0 & \mu_{2}\\
      0 & \mu_{3} & 0 & 0\\
       0 & 0 &  \mu_{4} & 0
   \end{pmatrix}.
 \]
  Тогда характеристический многочлен оператора $G$ имеет вид
  \[\chi_{G}(x) = (x - \mu_{1})(x^3 - \mu_{2}\mu_{3}\mu_{4}) = x^4 - \mu_{1} x^3 - \mu_{2}\mu_{3}\mu_{4} x \pm 1 \in \Z[x].\]
  Следовательно, $\mu_{1}$ --- целое число, и при этом $\mu_{1}$ --- корень уравнения $\chi_{G}(x) = 0$, то есть $\mu_{1} = \pm 1$. Стало быть, $l_1$ --- собственное подпространство оператора $G$, соответствующее собственному значению $\mu_{1} = \pm 1$. То есть $l_1$ рационально, что противоречит гиперболичности оператора $A$.
  
 Предположим существует такая нумерация подпространств $l_1, l_2, l_3, l_4$, что $\sigma_{G} = (1) (2) (3, 4)$. Таким образом существуют такие вещественные числа $\mu_{1}$, $\mu_{2}$, $\mu_{3}$, $\mu_{4}$, что матрица оператора $G$ в базисе $\vec{l}_{1}, \vec{l}_{2}, \vec{l}_{3}, \vec{l}_{4}$ имеет вид
 \[
   \begin{pmatrix}
     \mu_{1} & 0 & 0 & 0\\
     0 & \mu_{2} & 0 & 0\\
      0 & 0 & 0 & \mu_{3}\\
       0 & 0 &  \mu_{4} & 0
   \end{pmatrix}.
 \]
 Тогда характеристический многочлен оператора $G$, коэффициенты которого целочисленны, имеет вид
  \[(x - \mu_{1})(x - \mu_{2})(x^2 - \mu_{3}\mu_{4}) =\]
  \[ = x^4 - (\mu_{1} + \mu_{2})x^3 + (\mu_{1}\mu_{2} - \mu_{3}\mu_{4})x^2  + (\mu_{1} + \mu_{2}) \mu_{3}\mu_{4}x - \mu_{1}\mu_{2}\mu_{3}\mu_{4}.\]
 Так как $\mu_{1} + \mu_{2} \in \Z$, то $\mu_{3}\mu_{4} \in \Q$. Тогда существуют такие взаимно-простые целые числа $p \ge 1$ и $q \ge 1$, что $|\mu_{3}\mu_{4}| = \frac{p}{q}$, $|\mu_{1}\mu_{2}| = \frac{q}{p}$, а значит
 \[ |\mu_{1}\mu_{2} - \mu_{3}\mu_{4}| = \frac{\pm p^2 \pm q^2}{pq}.\]
 
 Итак, $p^2$ делится на $q$ и $q^2$ делится на $p$, то есть $p=q=1$ и $\mu_{3}\mu_{4} = \pm 1$. Таким образом матрица оператора $G^2$ в базисе $\vec{l}_{1}, \vec{l}_{2}, \vec{l}_{3}, \vec{l}_{4}$ имеет вид
 \[
   \begin{pmatrix}
     \mu_{1}^2 & 0 & 0 & 0\\
     0 & \mu_{2}^2 & 0 & 0\\
      0 & 0 & \pm 1 & 0\\
       0 & 0 &  0 & \pm 1
   \end{pmatrix}.
 \]
Из леммы \ref{rational_eigen} следует, что $G^2 = I_{4}$, то есть $\mu_{1} = \pm 1$. Вновь применяя лемму \ref{rational_eigen} получаем, что $G = \pm I_{4}$, чего не может быть. 
 
   Таким образом существует такая нумерация подпространств $l_1, l_2, l_3, l_4$, что $\sigma_{G} = (1, 2)(3, 4)$ или $\sigma_{G} = (1, 2, 3, 4)$.

  \end{proof}
 
 \begin{corollary}\label{all_to_2_2}
Пусть $G$ --- палиндромическия симметрия $\cf(l_1,l_2,l_3, l_4)\in\gA_3$. Пусть $G' = G^{2}$, если $\textup{ord}(\sigma_{G}) = 4$, и $G' = G$, если $\textup{ord}(\sigma_{G}) = 2$. Тогда $G'$ --- палиндромическия симметрия $\cf(l_1,l_2,l_3, l_4)$ и $\textup{ord}(\sigma_{G'}) = 2$.
\end{corollary}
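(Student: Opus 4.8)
The plan is to reduce the whole statement to two essentially formal facts about the symmetry group and then invoke Lemma \ref{ord_3}. First I would record that $\textup{Sym}_{\Z}\big(\cf(A)\big)$ is a subgroup of $\Gl_n(\Z)$: by definition it is the stabilizer of the set $\cf(A)$ under the action of $\Gl_n(\Z)$, hence is closed under composition and inversion. In particular, since $G\in\textup{Sym}_{\Z}\big(\cf(A)\big)$, every power $G^{k}$ again lies in $\textup{Sym}_{\Z}\big(\cf(A)\big)$, so in both cases $G'$ is automatically a symmetry of the continued fraction, and only the structure of $\sigma_{G'}$ remains to be analysed.

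Next I would observe that the assignment $G\mapsto\sigma_G$ from (\ref{eq:repres}) is a group homomorphism from $\textup{Sym}_{\Z}\big(\cf(A)\big)$ into the symmetric group on the indices $\{1,2,3,4\}$. Indeed, for $G,H\in\textup{Sym}_{\Z}\big(\cf(A)\big)$ one has $(GH)(l_i)=G\big(l_{\sigma_H(i)}\big)=l_{\sigma_G(\sigma_H(i))}$, and by the uniqueness of the permutation associated to each symmetry (noted right after (\ref{eq:repres})) this forces $\sigma_{GH}=\sigma_G\sigma_H$. In particular $\sigma_{G^{2}}=\sigma_G^{2}$, which is the only algebraic identity the argument needs.

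Finally I would split into the two cases supplied by Lemma \ref{ord_3}. If $\textup{ord}(\sigma_G)=2$, then $G'=G$ and the claim is immediate from the hypothesis. If $\textup{ord}(\sigma_G)=4$, then after the renumbering of Lemma \ref{ord_3} we have $\sigma_G=(1,2,3,4)$, and hence $\sigma_{G'}=\sigma_{G^{2}}=\sigma_G^{2}=(1,3)(2,4)$. This permutation is not the identity, so $G'$ is not a Dirichlet symmetry and is therefore a palindromic symmetry; moreover it has order $2$, giving $\textup{ord}(\sigma_{G'})=2$, exactly as required. I do not expect any genuine obstacle here: everything rests on the group law in $\textup{Sym}_{\Z}\big(\cf(A)\big)$ and on the homomorphism property of $G\mapsto\sigma_G$, both already available from the excerpt. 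The single concrete computation, that squaring the $4$-cycle produces the fixed-point-free involution $(1,3)(2,4)$, is trivial, and the only point worth stating carefully is the multiplicativity $\sigma_{G^2}=\sigma_G^2$.
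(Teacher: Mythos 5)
Your proposal is correct and follows exactly the route the paper intends: the paper states Corollary~\ref{all_to_2_2} without a separate proof, treating it as an immediate consequence of Lemma~\ref{ord_3} via the group structure of $\textup{Sym}_{\Z}\big(\cf(A)\big)$, the multiplicativity $\sigma_{G^2}=\sigma_G^2$, and the computation that the square of a $4$-cycle is the non-identity involution $(1,3)(2,4)$. Your write-up simply makes these implicit steps explicit, so there is nothing to correct.
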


Пусть $G$ --- палиндромическия симметрия $\cf(l_1,l_2,l_3, l_4)\in\gA_3$.  Изменив при необходимости нумерацию подпространств $l_1, l_2, l_3, l_4$, в силу леммы \ref{ord_3} можно рассмотреть такие вещественные числа $\mu_{1}$, $\mu_{2}$, $\mu_{3}$, $\mu_{4}$, что матрица оператора $G$ в базисе $\vec{l}_{1}, \vec{l}_{2}, \vec{l}_{3}, \vec{l}_{4}$ имеет вид
  \begin{equation}\label{matrix_ord_4_0}
   \begin{pmatrix}
     0 & 0 & 0 &  \mu_{1}\\
     \mu_{2} & 0 & 0 & 0\\
      0 & \mu_{3} & 0 & 0\\
       0 & 0 &  \mu_{4} & 0
   \end{pmatrix}
 \end{equation}
или вид 
 \begin{equation}\label{matrix_ord_2_0}
   \begin{pmatrix}
     0 & 0 & \mu_{1} &  0\\
     0 & 0 & 0 &  \mu_{2}\\
     \mu_{3} & 0 & 0 & 0\\
     0 & \mu_{4} &  0 & 0
   \end{pmatrix}.
 \end{equation}

Пусть $G$ --- палиндромическая симметрия $\cf(l_1,l_2,l_3, l_4)\in\gA_3$ и матрица оператора $G$ в базисе $\vec{l}_{1}, \vec{l}_{2}, \vec{l}_{3}, \vec{l}_{4}$ имеет вид \ref{matrix_ord_4_0}. В работе \cite{tlyust} доказывается, что $G$ является собственной симметрией $\cf(l_1,l_2, l_3, l_4)$ тогда и только тогда, когда $\mu_{1}\mu_{2}\mu_{3}\mu_{4} = 1$ (см. следствие 1 в \cite{tlyust}). Для палиндромических симметрий вида \ref{matrix_ord_2_0} справедливо аналогичное утверждение:

\begin{lemma}\label{prod_lemma}
  Пусть $G$ --- палиндромическая симметрия $\cf(l_1,l_2,l_3, l_4)\in\gA_3$ и матрица оператора $G$ в базисе $\vec{l}_{1}, \vec{l}_{2}, \vec{l}_{3}, \vec{l}_{4}$ имеет вид \ref{matrix_ord_2_0}. Тогда $G$ является собственной симметрией дроби $\cf(l_1,l_2, l_3, l_4)$ в том и только том случае, если $\mu_{1}\mu_{3} = \mu_{2}\mu_{4} = 1$.
\end{lemma}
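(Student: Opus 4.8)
The plan is to read off the fixed‑point set of $G$ in the eigenbasis $\vec l_1,\vec l_2,\vec l_3,\vec l_4$ and to play it against the irrationality of the cones. In this basis the form \ref{matrix_ord_2_0} shows that $G$ preserves the two coordinate planes $\spanned(\vec l_1,\vec l_3)$ and $\spanned(\vec l_2,\vec l_4)$, acting on them (in the bases $(\vec l_1,\vec l_3)$ and $(\vec l_2,\vec l_4)$) by $\left(\begin{smallmatrix}0&\mu_1\\\mu_3&0\end{smallmatrix}\right)$ and $\left(\begin{smallmatrix}0&\mu_2\\\mu_4&0\end{smallmatrix}\right)$; since $\det G=\mu_1\mu_2\mu_3\mu_4=\pm1$, all $\mu_i$ are nonzero. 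The eigenvalues of these blocks are $\pm\sqrt{\mu_1\mu_3}$ and $\pm\sqrt{\mu_2\mu_4}$, so the fixed subspace $E=\ker(G-I_4)$ splits as $E=E_{13}\oplus E_{24}$, where $E_{13}=\R(\mu_1\vec l_1+\vec l_3)$ when $\mu_1\mu_3=1$ and $E_{13}=\{\vec 0\}$ otherwise, and symmetrically for $E_{24}$. Thus the target condition $\mu_1\mu_3=\mu_2\mu_4=1$ is precisely the statement that $E$ is two‑dimensional (equivalently $G^2=I_4$).

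For sufficiency I would take $\mu_1\mu_3=\mu_2\mu_4=1$, so that $E$ is spanned by $\vec v_1=\mu_1\vec l_1+\vec l_3$ and $\vec v_2=\mu_2\vec l_2+\vec l_4$. A point $s\vec v_1+t\vec v_2$ has eigencoordinates $(s\mu_1,t\mu_2,s,t)$, so for $s,t>0$ the whole open $2$‑sector lies in the interior of the single cone $C$ with sign pattern $(\operatorname{sign}\mu_1,\operatorname{sign}\mu_2,+,+)$. I would then invoke the standard property of Klein sails that every ray issuing from $\vec 0$ into the interior of an irrational cone meets its sail $\partial\cK(C)$ in exactly one point; that point lies in $E$, hence is fixed by $G$ and lies on a sail, so $G$ is a proper symmetry.

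For necessity I argue by contradiction, and this is where the irrationality does the real work. If $G$ is proper there is a point $p\neq\vec 0$ with $Gp=p$ lying on some sail $\partial\cK(C)$, so $p\in E\setminus\{\vec 0\}$ and at least one of $\mu_1\mu_3,\mu_2\mu_4$ equals $1$. Suppose only one does, say $\mu_1\mu_3=1$ and $\mu_2\mu_4\neq1$; then $E=E_{13}\subseteq\spanned(\vec l_1,\vec l_3)$, so the $\vec l_2$‑ and $\vec l_4$‑coordinates of $p$ vanish. Expressing $p=\sum_i\lambda_i q_i$ as a finite convex combination of lattice points $q_i\in C\cap\Z^4\setminus\{\vec 0\}$ (any point of a convex hull is such a combination), and using that $C$ lies entirely on one side of each of the supporting hyperplanes $\spanned(\vec l_1,\vec l_3,\vec l_4)$ and $\spanned(\vec l_1,\vec l_2,\vec l_3)$, both of which contain $p$, positivity forces every $q_i$ with $\lambda_i>0$ into $\spanned(\vec l_1,\vec l_3)$. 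But this plane lies inside the linear hull of a hyperface of the irrational cone $C$ and therefore contains no nonzero lattice point, a contradiction. Hence both products equal $1$, the argument being symmetric in the two cases.

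The step I expect to be the main obstacle is precisely this last one: excluding a fixed point that lands on a proper face of the cone. Everything else is linear algebra together with the standard ray/sail intersection fact; the crux is the convexity‑plus‑irrationality argument showing that a fixed point with two vanishing eigencoordinates would drag all of its generating lattice points into a forbidden $2$‑plane.
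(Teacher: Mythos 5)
Your proof is correct, and while the sufficiency direction matches the paper in substance, your necessity argument takes a genuinely different route. For sufficiency the paper also computes the characteristic polynomial $x^4-2x^2+1$, but then extracts an \emph{integer} eigenvector for the eigenvalue $1$, places it inside some cone via algebraicity (irrationality of the cones), and implicitly finishes with the same ray--sail intersection fact you invoke; your version works with an arbitrary ray in the two‑dimensional fixed plane, which changes nothing essential. For necessity the paper argues as follows: a fixed point on a sail forces $G$ to map the cone containing it to itself, which yields the sign conditions $\mu_1\mu_3>0$ and $\mu_2\mu_4>0$; the presence of the eigenvalue $1$ in $(x^2-\mu_1\mu_3)(x^2-\mu_2\mu_4)$ gives $\mu_1\mu_3=1$ or $\mu_2\mu_4=1$, and then $\det G=\mu_1\mu_2\mu_3\mu_4=\pm1$ combined with positivity forces both products to equal $1$. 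You instead rule out the mixed case $\mu_1\mu_3=1$, $\mu_2\mu_4\neq1$ geometrically: the fixed point would lie in $\spanned(\vec l_1,\vec l_3)$, and writing it as a finite convex combination of lattice points of the cone and using the two supporting hyperplanes through it forces those lattice points into $\spanned(\vec l_1,\vec l_3)\subset\spanned(l_1,l_2,l_3)$, contradicting the irrationality of the cone. Both are sound; the paper's finish is shorter (pure linear algebra once the sign conditions are in hand), whereas yours never needs the determinant or the sign conditions at all, paying for this with a second appeal to convexity plus irrationality --- and it disposes of the possibility $\mu_2\mu_4=-1$ (eigenvalues $\pm i$ on $\spanned(\vec l_2,\vec l_4)$) in the same stroke rather than via positivity.
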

  
\begin{proof}
Пусть $G$ является собственной симметрией цепной дроби $\cf(l_1,l_2, l_3, l_4)$. Тогда существуют такие числа $\varepsilon_{1}$, $\varepsilon_{2}$, $\varepsilon_{3}$, $\varepsilon_{4}$ из множества $\{-1, 1\}$, что
\[G(\varepsilon_{1}\vec{l}_1, \varepsilon_{2}\vec{l}_2, \varepsilon_{3}\vec{l}_3, \varepsilon_{4}\vec{l}_4) =  \big(\mu_3\varepsilon_{1}\vec l_3,\mu_4\varepsilon_{2}\vec l_4,\mu_1\varepsilon_{3}\vec l_1,\mu_2\varepsilon_{4}\vec l_2\big),\]
и выполняются неравенства
\[\mu_1\frac{\varepsilon_{3}}{\varepsilon_{1}} > 0, \, \, \mu_2\frac{\varepsilon_{4}}{\varepsilon_{2}} > 0, \, \, \mu_3\frac{\varepsilon_{1}}{\varepsilon_{3}} > 0, \, \, \mu_4\frac{\varepsilon_{2}}{\varepsilon_{4}} > 0.\]
Стало быть, $\mu_{1}\mu_{3} > 0$ и $\mu_{2}\mu_{4} > 0$. Так как у оператора $G$ существует неподвижная точка на некотором парусе, то у оператора $G$ существует одномерное собственное подпространство, соответствующее собственному значению $1$. Теперь, поскольку характеристический многочлен оператора $G$ имеет вид $(x^2-\mu_{1}\mu_{3})(x^2-\mu_{2}\mu_{4})$, то $\mu_{1}\mu_{3} = 1$ или $\mu_{2}\mu_{4} = 1$. Тогда $\mu_{1}\mu_{3} = \mu_{2}\mu_{4} = 1$.

Если $\mu_{1}\mu_{3} = \mu_{2}\mu_{4} = 1$, то, опять же, характеристический многочлен оператора $G$ имеет вид $x^{4} - 2x^{2} + 1$. Стало быть у оператора $G$ существует целочисленный собственный вектор, соответствующий собственному значению $1$. Этот вектор лежит внутри некоторого конуса $C \in \mathcal{C}(l_1, l_2, l_3, l_4)$, поскольку цепная дробь $\cf(l_1,l_2,l_3, l_4)$ является алгебраической. 

\end{proof}

 \begin{corollary}\label{property_ord_eq}
Пусть $G$ --- палиндромическия симметрия $\cf(l_1,l_2,l_3, l_4)\in\gA_3$. Тогда $G$ является собственной симметрией в том и только том случае, если $G'$ (см. следствие \ref{all_to_2_2}) является собственной симметрией. 
\end{corollary}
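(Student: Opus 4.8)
План доказательства распадается на два случая в зависимости от порядка перестановки $\sigma_G$. Если $\textup{ord}(\sigma_G) = 2$, то по определению из следствия \ref{all_to_2_2} имеем $G' = G$, и эквивалентность тривиальна. Поэтому всё содержание сосредоточено в случае $\textup{ord}(\sigma_G) = 4$, когда $G' = G^2$. Здесь, изменив при необходимости нумерацию подпространств, по лемме \ref{ord_3} можно считать, что матрица оператора $G$ в базисе $\vec{l}_1, \vec{l}_2, \vec{l}_3, \vec{l}_4$ имеет вид \ref{matrix_ord_4_0}.

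Первый шаг --- прямым возведением в квадрат найти матрицу оператора $G' = G^2$ в том же базисе. Поскольку $G(\vec l_1) = \mu_2\vec l_2$, $G(\vec l_2) = \mu_3\vec l_3$, $G(\vec l_3) = \mu_4\vec l_4$, $G(\vec l_4) = \mu_1\vec l_1$, получаем
\[
   \begin{pmatrix}
     0 & 0 & \mu_1\mu_4 &  0\\
     0 & 0 & 0 &  \mu_1\mu_2\\
     \mu_2\mu_3 & 0 & 0 & 0\\
     0 & \mu_3\mu_4 &  0 & 0
   \end{pmatrix},
\]
то есть $G'$ имеет в точности вид \ref{matrix_ord_2_0} с параметрами $\mu_1' = \mu_1\mu_4$, $\mu_2' = \mu_1\mu_2$, $\mu_3' = \mu_2\mu_3$, $\mu_4' = \mu_3\mu_4$. Это согласуется с тем, что $\sigma_{G'} = \sigma_G^2 = (1,3)(2,4)$ имеет порядок $2$, а также с тем, что по следствию \ref{all_to_2_2} оператор $G'$ действительно является палиндромической симметрией рассматриваемой цепной дроби.

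Второй шаг --- применить два уже доказанных критерия собственности и заметить, что они сводятся к одному и тому же условию на скаляры $\mu_i$. По следствию~1 из \cite{tlyust} оператор $G$ вида \ref{matrix_ord_4_0} собственен тогда и только тогда, когда $\mu_1\mu_2\mu_3\mu_4 = 1$. По лемме \ref{prod_lemma} оператор $G'$ вида \ref{matrix_ord_2_0} собственен тогда и только тогда, когда $\mu_1'\mu_3' = \mu_2'\mu_4' = 1$. Ключевое наблюдение состоит в том, что оба произведения в критерии для $G'$ совпадают с одним и тем же выражением:
\[
\mu_1'\mu_3' = (\mu_1\mu_4)(\mu_2\mu_3) = \mu_1\mu_2\mu_3\mu_4, \qquad \mu_2'\mu_4' = (\mu_1\mu_2)(\mu_3\mu_4) = \mu_1\mu_2\mu_3\mu_4.
\]
Стало быть, условие собственности для $G'$ равносильно равенству $\mu_1\mu_2\mu_3\mu_4 = 1$, то есть в точности тому же условию, что и для $G$. Отсюда немедленно следует, что $G$ собственна тогда и только тогда, когда собственен $G'$.

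Существенных препятствий здесь не предвидится: всё доказательство сводится к аккуратному вычислению $G^2$ и сопоставлению полученных параметров с обозначениями леммы \ref{prod_lemma}. Единственное, за чем нужно внимательно следить, --- это правильное отслеживание индексов при отождествлении матрицы $G^2$ с формой \ref{matrix_ord_2_0}; содержательная же часть состоит в том, что критерий из \cite{tlyust} и лемма \ref{prod_lemma} формулируются через одну и ту же величину $\mu_1\mu_2\mu_3\mu_4$.
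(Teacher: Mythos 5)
Ваше доказательство верно и по существу совпадает с доказательством в статье: тот же переход к виду \ref{matrix_ord_4_0} при $\textup{ord}(\sigma_G)=4$, то же вычисление матрицы $G^2$ и то же сведение обоих критериев собственности (следствие 1 из \cite{tlyust} и лемма \ref{prod_lemma}) к единому условию $\mu_1\mu_2\mu_3\mu_4=1$. Единственное отличие чисто косметическое: в статье прямая импликация отмечена как очевидная (неподвижная точка $G$ на парусе неподвижна и для $G^2$), а вы выводите обе импликации из совпадения численных критериев.
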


\begin{proof}
Если $G$ является собственной симметрией $\cf(l_1,l_2,l_3, l_4)$, то, очевидно, оператор $G'$ также является собственной симметрией цепной дроби $\cf(l_1,l_2,l_3, l_4)$. 

Обратно, предположим $G'$ --- собственная симметрия $\cf(l_1,l_2,l_3, l_4)$. Если $\textup{ord}(\sigma_{G}) = 4$, то, изменив при необходимости нумерацию подпространств $l_1, l_2, l_3, l_4$, можно считать, что матрица оператора $G$ в базисе $\vec{l}_{1}, \vec{l}_{2}, \vec{l}_{3}, \vec{l}_{4}$ имеет вид \ref{matrix_ord_2_0}. Тогда матрица оператора $G' = G^{2}$ в базисе $\vec{l}_{1}, \vec{l}_{2}, \vec{l}_{3}, \vec{l}_{4}$ имеет вид

\[
   \begin{pmatrix}
     0 & 0 & \mu_{1}\mu_{4} &  0\\
     0 & 0 & 0 &  \mu_{2}\mu_{1}\\
     \mu_{3}\mu_{2} & 0 & 0 & 0\\
     0 & \mu_{4}\mu_{3} &  0 & 0
   \end{pmatrix}.
\]
 Стало быть, в силу леммы \ref{prod_lemma}, $\mu_{1}\mu_{2}\mu_{3}\mu_{4} = 1$, а значит $G$ --- собственная симметрия цепной дроби $\cf(l_1,l_2, l_3, l_4)$ (см. следствие 1 в \cite{tlyust}).

\end{proof}

  \begin{lemma}\label{rational_subspace_2_2}
  Пусть $G$ --- собственная симметрия $\cf(l_1,l_2,l_3, l_4)\in\gA_3$ и $\textup{ord}({\sigma_{G}}) = 2$. Тогда существуют такие одномерные рациональные подпространства $l^{1}_{+}$, $l^{2}_{+}$, $l^{1}_{-}$ и $l^{2}_{-}$, что $G l^{1}_{+} = l^{1}_{+}$, $G l^{2}_{+} = l^{2}_{+}$,  $G l^{1}_{-} = l^{1}_{-}$,  $G l^{2}_{-} = l^{2}_{-}$ и $l^{1}_{+} + l^{2}_{+} + l^{1}_{-} + l^{2}_{-}= \R^{4}$. При этом подпространства $l^{1}_{+}$ и $l^{2}_{+}$ соответствуют собственному значению $1$, а подпространства $l^{1}_{-}$ и $l^{2}_{-}$ соответствуют собственному значению $-1$.
  \end{lemma}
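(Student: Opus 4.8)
The plan is to exploit the very rigid matrix shape that the hypothesis $\textup{ord}(\sigma_G) = 2$ forces on $G$. Since $\textup{ord}(\sigma_G) = 2$, after relabelling the subspaces $l_1, l_2, l_3, l_4$ we may assume, by Lemma \ref{ord_3} and the discussion following it, that the matrix of $G$ in the basis $\vec l_1, \vec l_2, \vec l_3, \vec l_4$ has the form \ref{matrix_ord_2_0}. As $G$ is a proper symmetry of this form, Lemma \ref{prod_lemma} gives $\mu_1\mu_3 = \mu_2\mu_4 = 1$. A direct computation of $G^2$ in this basis (using $G\vec l_1 = \mu_3\vec l_3$, $G\vec l_3 = \mu_1\vec l_1$, and so on) then shows $G^2 = I_4$, so $G$ is an involution, and its characteristic polynomial is $(x^2 - \mu_1\mu_3)(x^2 - \mu_2\mu_4) = (x-1)^2(x+1)^2$.

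First I would record the consequences for the spectrum. Being an involution, $G$ is diagonalizable over $\R$, and since its characteristic polynomial is $(x-1)^2(x+1)^2$, the eigenvalues $1$ and $-1$ each occur with algebraic — hence, by diagonalizability, geometric — multiplicity $2$. Thus $\R^4 = \ker(G - I_4) \oplus \ker(G + I_4)$, with both summands of dimension $2$ and the sum direct because eigenspaces for distinct eigenvalues meet only in $\vec 0$.

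The crucial point is rationality. Because $G \in \Gl_4(\Z)$ and the eigenvalues $\pm 1$ are rational, the matrices $G - I_4$ and $G + I_4$ have integer entries, so their kernels $\ker(G - I_4)$ and $\ker(G + I_4)$ are rational subspaces of $\R^4$: each admits a basis of integer vectors, obtained by solving the corresponding homogeneous linear system over $\Q$. Choosing a rational basis $\vec u_1, \vec u_2$ of the $2$-dimensional space $\ker(G - I_4)$ and a rational basis $\vec v_1, \vec v_2$ of $\ker(G + I_4)$, I would set $l^1_+ = \spanned(\vec u_1)$, $l^2_+ = \spanned(\vec u_2)$, $l^1_- = \spanned(\vec v_1)$, $l^2_- = \spanned(\vec v_2)$. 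Each of these is a one-dimensional rational subspace; each is fixed by $G$, since $G$ acts as $+1$ on the $\vec u_i$ and as $-1$ on the $\vec v_j$ (and multiplication by $-1$ still preserves the line); and their sum is $\ker(G - I_4) + \ker(G + I_4) = \R^4$, with $l^1_+, l^2_+$ attached to the eigenvalue $1$ and $l^1_-, l^2_-$ to the eigenvalue $-1$, exactly as claimed.

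I expect no serious obstacle here: the argument is essentially complete once rationality is in place, and it reduces to two routine verifications — that $G^2 = I_4$ (which delivers diagonalizability and the even splitting of the multiplicities) and that the kernel of an integer matrix is spanned by integer vectors. The only mild subtlety worth flagging is that fixing a line on which $G$ acts by $-1$ still counts as $G\, l^i_- = l^i_-$ at the level of subspaces, which is precisely what the statement requests.
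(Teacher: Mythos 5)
Your argument is correct and follows essentially the same route as the paper: reduce to the matrix form \ref{matrix_ord_2_0} with $\mu_1\mu_3=\mu_2\mu_4=1$ via Lemma \ref{prod_lemma}, deduce $\chi_G(x)=(x-1)^2(x+1)^2$ with two-dimensional eigenspaces for $\pm1$, and obtain rationality of these eigenspaces as kernels of the integer matrices $G\mp I_4$ before splitting each into two rational lines. Your extra observation that $G^2=I_4$ is a slightly cleaner way to justify the geometric multiplicities, but the substance is identical to the paper's proof.
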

 
  \begin{proof}
   Изменив при необходимости нумерацию подпространств $l_1, l_2, l_3, l_4$, в силу леммы \ref{prod_lemma} можно считать, что существуют такие вещественные числа $\mu_{1}$ и $\mu_{2}$, что  матрица оператора $G$ в базисе $\vec{l}_{1}, \vec{l}_{2}, \vec{l}_{3}, \vec{l}_{4}$ имеет вид
\[
   \begin{pmatrix}
     0 & 0 & \mu_{1} &  0\\
     0 & 0 & 0 &  \mu_{2}\\
     \frac{1}{\mu_{1}} & 0 & 0 & 0\\
     0 & \frac{1}{\mu_{2}} &  0 & 0
   \end{pmatrix}.
 \]

Так как $\chi_{G}(x) = (x - 1)^2(x + 1)^2$, то у оператора $G$ есть двумерное инвариантное подпространство $L_{+}$, соответствующее собственному значению $1$ и двумерное инвариантное подпространство $L_{-}$, соответствующее собственному значению $-1$. Покажем рациональность подпространств $L_{+}$ и $L_{-}$, из чего будет следовать утверждение леммы.

Поскольку подпространство $L_{+}$ совпадает с решением системы линейных уравнений
\[(G-I_4)\vec{x}^\top = \vec{0},\]
то фундаментальная система решений данной системы линейных уравнений имеет размерность 2. Рассмотрев в качестве значений свободных переменных наборы $(0, 1)$ и $(1, 0)$, мы определим два линейно-независимых рациональных решения данной системы, из чего следует рациональность  $L_{+}$. Рациональность подпространства $L_{-}$ доказывается аналогичным способом.

\end{proof}

\section{Геометрия собственных симметрий}\label{geom_proper_n_4}
  
 \begin{lemma}\label{main_lem_2}
   Пусть $G$ --- собственная симметрия дроби $\cf(l_1,l_2,l_3, l_4)\in\gA_3$. Пусть $F=G'$ (см. следствие \ref{all_to_2_2}) --- собственная симметрия $\cf(l_1,l_2,l_3, l_4)$ (см. следствие \ref{property_ord_eq}). Тогда существуют $\vec{z}_1$, $\vec{z}_2$, $\vec{z}_3$, $\vec{z}_4$ $\in$ $\Z^4$, такие что
\[F(\vec{z}_{1}) = \vec{z}_{3}, \, F(\vec{z}_{2}) = \vec{z}_{4}, \, F(\vec{z}_{3}) = \vec{z}_{1}, \, F(\vec{z}_{4}) = \vec{z}_{2}\] 
и выполняется хотя бы одно из следующих одиннадцати утверждений:

\textup{(1)} вектора $\vec{z}_{1}$, $\vec{z}_{2}$, $\vec{z}_{3}$, $\frac{1}{4}(\vec{z}_{1}+\vec{z}_{2}+\vec{z}_{3}+\vec{z}_{4})$ образуют базис решетки $\Z^4$;

\textup{(2)} вектора $\vec{z}_{1}$, $\vec{z}_{2}$,  $\vec{z}_{3}$, $\vec{z}_{4}$ образуют базис решетки $\Z^4$;

\textup{(3)} вектора $\vec{z}_{1}$, $\frac{1}{2}(\vec{z}_{1} + \vec{z}_{2})$, $\frac{1}{2}(\vec{z}_{1}+\vec{z}_{3})$, $\frac{1}{2}(\vec{z}_{1} + \vec{z}_{4})$ образуют базис решетки $\Z^4$;

\textup{(4)} вектора $\vec{z}_{1}$, $\vec{z}_{2}$, $\frac{1}{2}(\vec{z}_{1}+\vec{z}_{3})$, $\frac{1}{4}(\vec{z}_{1}+\vec{z}_{2}+\vec{z}_{3}+\vec{z}_{4})$ образуют базис решетки $\Z^4$;

\textup{(5)} вектора $\vec{z}_{1}$, $\vec{z}_{2}$, $\frac{1}{2}(\vec{z}_{1}+\vec{z}_{3})$, $\frac{1}{2}(\vec{z}_{2}+\vec{z}_{4})$ образуют базис решетки $\Z^4$;

\textup{(6)} вектора $\vec{z}_{1}$, $\vec{z}_{2}$, $\vec{z}_{3}$, $\frac{1}{2}(\vec{z}_{1} + \vec{z}_{3} + \vec{z}_{4} - \vec{z}_{2})$ образуют базис решетки $\Z^4$;

\textup{(7)} вектора $\vec{z}_{1}$, $\vec{z}_{2}$, $\vec{z}_{3}$, $\frac{1}{2}(\vec{z}_{1}+\vec{z}_{2}) + \frac{1}{4}(\vec{z}_{1}+\vec{z}_{4} - \vec{z}_{3} - \vec{z}_{2})$ образуют базис решетки $\Z^4$;

\textup{(8)} вектора $\vec{z}_{1}$, $\vec{z}_{2}$, $\vec{z}_{3}$, $\frac{1}{2}(\vec{z}_{2} + \vec{z}_{4})$ образуют базис решетки $\Z^4$;

\textup{(9)} вектора $\vec{z}_{1}$, $\vec{z}_{2}$, $\frac{1}{2}(\vec{z}_{1} + \vec{z}_{3})$, $\frac{1}{4}\vec{z}_{1} +  \frac{1}{2}\vec{z}_{2} - \frac{1}{4}\vec{z}_{3} + \frac{1}{2}\vec{z}_{4}$ образуют базис решетки $\Z^4$;

\textup{(10)} вектора $\vec{z}_{1}$, $\vec{z}_{2}$, $\frac{1}{2}(\vec{z}_{1}+\vec{z}_{3})$, $\frac{1}{2}\vec{z}_{1} +  \frac{1}{4}\vec{z}_{2} +  \frac{1}{2}\vec{z}_{4}$ образуют базис решетки $\Z^4$;

\textup{(11)} вектора $\vec{z}_{1}$, $\frac{1}{2}(\vec{z}_{1} + \vec{z}_{2})$, $\vec{z}_{3}$, $\frac{1}{2}(\vec{z}_{1} + \vec{z}_{3} + \vec{z}_{4} - \vec{z}_{2})$ образуют базис решетки $\Z^4$.
\end{lemma}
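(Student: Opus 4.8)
The plan is to reduce the statement to the arithmetic of $F$ as an integral involution and then to carry out a finite classification of how $\Z^4$ sits relative to the eigenlattices of $F$. First I would record that, since $F=G'$ is a proper symmetry with $\textup{ord}(\sigma_F)=2$, Lemma \ref{prod_lemma} forces $\chi_F(x)=(x-1)^2(x+1)^2$, and the argument of Lemma \ref{rational_subspace_2_2} shows that $F$ is diagonalizable over $\Q$ with rational two-dimensional eigenspaces $L_+$ (eigenvalue $1$) and $L_-$ (eigenvalue $-1$); in particular $F^2=I_4$. Consequently, for \emph{any} choice of $\vec z_1,\vec z_2\in\Z^4$, setting $\vec z_3:=F(\vec z_1)$ and $\vec z_4:=F(\vec z_2)$ makes all four relations $F(\vec z_1)=\vec z_3$, $F(\vec z_2)=\vec z_4$, $F(\vec z_3)=\vec z_1$, $F(\vec z_4)=\vec z_2$ hold automatically. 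Thus the entire content of the lemma is the combinatorial task of choosing $\vec z_1,\vec z_2$ so that one of the eleven basis patterns is realized.

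The next step is the arithmetic setup. I would introduce the saturated rank-$2$ eigenlattices $\La_+=\Z^4\cap L_+$, $\La_-=\Z^4\cap L_-$ and the projections $\pi_\pm=\tfrac12(I_4\pm F)$. For $\vec v\in\Z^4$ one has $\vec v+F(\vec v)\in\La_+$ and $\vec v-F(\vec v)\in\La_-$, whence $2\Z^4\subseteq\La_+\oplus\La_-\subseteq\Z^4$, so the quotient $Q=\Z^4/(\La_+\oplus\La_-)$ is an elementary abelian $2$-group. Saturation of $\La_\pm$ shows that $\bar{\vec v}\mapsto(\overline{\pi_+\vec v},\overline{\pi_-\vec v})$ embeds $Q$ into $\tfrac12\La_+/\La_+\times\tfrac12\La_-/\La_-$ as the graph of an isomorphism between subgroups, which yields a short list of normal forms for the triple $(\Z^4,\La_+,\La_-)$. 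The fractional combinations appearing in the patterns are exactly the eigen-components $\tfrac12(\vec z_i+\vec z_{i+2})=\pi_+(\vec z_i)$ and $\tfrac12(\vec z_i-\vec z_{i+2})=\pi_-(\vec z_i)$ of the $\vec z_i$; the factor $2$ reflects the index-$2$ gap between an $F$-cycle $\langle\vec z_i,F\vec z_i\rangle$ and the lattice of its eigen-components, and a factor $4$ appears precisely when this gap is compounded with a nontrivial class of $Q$ (e.g. $\tfrac14(\vec z_1+\vec z_2+\vec z_3+\vec z_4)=\tfrac12(\pi_+\vec z_1+\pi_+\vec z_2)$).

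The heart of the argument is the construction. I would fix bases of $\La_+$ and of $\La_-$ adapted to the normal form of $Q$, take $\vec z_1,\vec z_2$ to be the lattice vectors whose images under $\pi_+$ and $\pi_-$ realize the chosen basis vectors (halved when a class of $Q$ is present), and then write an honest $\Z$-basis of $\Z^4$ through $\vec z_1,\vec z_2,\vec z_3,\vec z_4$. The cleanest way to guarantee completeness is to phrase the enumeration as a classification of the admissible $\mathbb{F}_2[F]$-submodules $\Z^4/(M_1\oplus M_2)$ inside $\tfrac12(M_1\oplus M_2)/(M_1\oplus M_2)$, where $M_i=\langle\vec z_i,F\vec z_i\rangle$, subject to the saturation of $\La_\pm$; running through the finitely many normal forms of $Q$ together with the finitely many $2$-adic positions of the pair $(M_1,M_2)$ relative to $\La_+,\La_-$ produces precisely the eleven configurations, and in each case one matches the admissible submodule with the displayed basis.

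The hard part will be the exhaustiveness and correctness of this case analysis, especially the denominator-$4$ patterns (1), (4), (7), (9), (10). There one must verify simultaneously that the quarter-combinations actually lie in $\Z^4$ (a congruence check on the $\vec z_i$ modulo $4$) and that, together with the integral vectors, they form a unimodular system; the determinant bookkeeping from the projection description of the previous step is what makes both checks routine once the configuration is pinned down. The genuine obstacle is not any single verification but confirming that no admissible mutual position of $(M_1,M_2)$ and $(\La_+,\La_-)$ escapes the list of eleven, which I expect to settle by the submodule classification above rather than by ad hoc enumeration.
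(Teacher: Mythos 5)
Your proposal is correct in outline, but it takes a genuinely different route from the paper. The paper's argument is geometric: starting from the rational eigenspaces of Lemma \ref{rational_subspace_2_2} it builds nested rational planes (the hyperplane $S_1$, then the planes $Q$ and $R$ on either side of the fixed plane $\pi$), chooses $\vec{z}_1\in Q$, $\vec{z}_2\in R$ by a descent minimizing the parallelograms $\Delta^{\pi},\Delta^{Q},\Delta^{R}$, and extracts the eleven configurations from an enumeration of the lattice points these parallelograms may contain. You instead classify the pair $(\Z^4,F)$ arithmetically via the saturated eigenlattices $\La_{\pm}=\Z^4\cap L_{\pm}$ and the glue group $\Z^4/(\La_+\oplus\La_-)$; since this group embeds as the graph of an isomorphism between subgroups of $\tfrac12\La_{\pm}/\La_{\pm}\cong(\Z/2\Z)^2$ and $\Gl_2(\Z)\to\Gl_2(\Z/2\Z)$ is surjective, there are exactly three normal forms (glue group of order $1$, $2$ or $4$, i.e.\ the three decomposition types of a $\Z[C_2]$-lattice with two-dimensional eigenspaces). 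One clarification will lighten your load considerably: the lemma is a pure existence statement, with no further constraints tying the $\vec{z}_i$ to the subspaces $l_j$, and nothing more is used in Section \ref{proof_theorem_proper}; so you need not show that every admissible position of $(M_1,M_2)$ relative to $(\La_+,\La_-)$ falls into the list of eleven — only that each of your three normal forms admits a choice of $\vec{z}_1,\vec{z}_2$ realizing \emph{some} listed pattern. Configurations (2), (5) and (8) already do this (take $\vec{z}_1,\vec{z}_2$ to be the generators of the two regular summands, or $u_i+v_i$, or $w$ and $u+v$ in an adapted basis, respectively), so the denominator-$4$ patterns you worry about never have to be produced. Your route is therefore shorter and more transparent than the paper's, and it even shows that the eleven cases (and, downstream, the classes $\overline{\mathbf{CF}}_i$) are redundant; what the paper's construction buys is a family of points $\vec{z}_i$ attached to the geometry of the sail and a direct one-to-one match with the matrices $G_i$, but the stated lemma does not require that extra information.
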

\begin{proof}
Будем называть плоскость \emph{рациональной}, если множество содержащихся в нем целых точек является (аффинной) решеткой ранга, равного размерности этой плоскости.

Рассмотрим для собственной симметрии $F$ подпространства $l^{1}_{+}$, $l^{2}_{+}$, $l^{1}_{-}$ и $l^{2}_{-}$ из леммы \ref{rational_subspace_2_2} и положим $S = l^{2}_{+} + l^{1}_{-} + l^{2}_{-}$. Обозначим через $S_1$ ближайшую к $S$ рациональную гиперплоскость, параллельную $S$ и не совпадающую с $S$ (любую из двух). Тогда $G(S_1) = S_1$. Также обозначим через $\vec{p}$ точку пересечения гиперплоскости $S_1$ и $l^{1}_{+}$, а через $l$ и $\pi$ прямую и плоскость, проходящие через точку $\vec{p}$ и параллельные $ l^{2}_{+}$ и $L_{-} = l^{1}_{-} + l^{2}_{-}$ соответственно. При этом $F(l) = l$, $F(\pi) = \pi$ и $F(\vec{p}) = \vec{p}$.

Плоскость $\pi$ разделяет гиперплоскость $S_1$ на два множества $S^{+}_1$ и $S^{-}_1$. Пусть $Q$ и $R$ --- рациональные плоскости ближайшие к $\pi$, параллельные $\pi$ и не совпадающие с $\pi$, принадлежащие множествам $S^{+}_1$ и $S^{-}_1$ соответственно. Отметим, что вообще говоря, расстояния от $\pi$ до $Q$ и от $\pi$ до $R$ не обязательно равны. Положим $\vec{p}^{Q} = Q \cap l$ и $\vec{p}^{R} = R \cap l$. 

Пусть  $(\vec{z}_{1} , \vec{z}_{2})$ --- такая пара точек решетки $\Z^4$, что $\vec{z}_{1} \in Q, \vec{z}_{2} \in R$, вектора $\vec{z}_{1} - \vec{p}^{Q}$ и $\vec{z}_{2} - \vec{p}^{R}$ неколлинеарны. Тогда можно построить точки 
\[\vec{z}_{3} = F(\vec{z}_{1}) \in \Z^4 \cap Q, \quad \vec{z}_{4} = F(\vec{z}_{2}) \in \Z^4  \cap R,\]
\[\vec{z}^{R}_{1} = \vec{z}_{1} + (\vec{p}^{R} - \vec{p}^{Q}), \vec{z}^{R}_{3} = \vec{z}_{3} + (\vec{p}^{R} - \vec{p}^{Q}), \vec{z}^{Q}_{2} = \vec{z}_{2} + (\vec{p}^{Q} - \vec{p}^{R}), \vec{z}^{Q}_{4} = \vec{z}_{4} + (\vec{p}^{Q} - \vec{p}^{R}),\]
\[\vec{z}^{\pi}_{1} = \vec{z}_{1} + (\vec{p} - \vec{p}^{Q}), \vec{z}^{\pi}_{3} = \vec{z}_{3} + (\vec{p} - \vec{p}^{Q}), \vec{z}^{\pi}_{2} = \vec{z}_{2} + (\vec{p} - \vec{p}^{R}), \vec{z}^{\pi}_{4} = \vec{z}_{4} + (\vec{p} - \vec{p}^{R}).\]
Рассмотренные точки определяют тройку параллелограммов $(\Delta^{\pi}, \Delta^{Q}, \Delta^{R})$, где
\[\Delta^{\pi} = \textup{conv}(\vec{z}^{\pi}_{1}, \vec{z}^{\pi}_{2}, \vec{z}^{\pi}_{3}, \vec{z}^{\pi}_{4}) \subset \pi,\] 
\[\Delta^{Q} = \textup{conv}(\vec{z}_{1}, \vec{z}^{Q}_{2}, \vec{z}_{3}, \vec{z}^{Q}_{4}) \subset Q, \quad \Delta^{R} = \textup{conv}(\vec{z}^{R}_{1}, \vec{z}_{2}, \vec{z}^{R}_{3}, \vec{z}_{4}) \subset R.\]

При помощи метода спуска, можно построить такую пару $(\vec{z}_{1}, \vec{z}_{2})$, что (см. рисунок \ref{integer_points})
\[ \Delta^{Q} \cap \Z^4 \subseteq \{\vec{z}_{1}, \vec{z}^{Q}_{2}, \vec{z}_{3}, \vec{z}^{Q}_{4}, \vec{p}^{Q}, \frac{\vec{z}_{2}^{Q} + \vec{p}^{Q}}{2},  \frac{\vec{z}_{4}^{Q} + \vec{p}^{Q}}{2}\},\]
\[ \Delta^{R} \cap \Z^4 \subseteq \{\vec{z}^{R}_{1}, \vec{z}_{2}, \vec{z}^{R}_{3}, \vec{z}_{4}, \vec{p}^{R}, \frac{\vec{z}_{1}^{R} + \vec{p}^{R}}{2},  \frac{\vec{z}_{3}^{R} + \vec{p}^{R}}{2}\}, \]
и $\Delta^{\pi} \cap \Z^4$ является подмножеством множества
\scriptsize{\[  \{\vec{z}^{\pi}_{1}, \vec{z}^{\pi}_{2}, \vec{z}^{\pi}_{3}, \vec{z}^{\pi}_{4}, \vec{p}, \frac{\vec{z}_{1}^{\pi} + \vec{z}_{2}^{\pi}}{2},  \frac{\vec{z}_{2}^{\pi} + \vec{z}_{3}^{\pi}}{2}, \frac{\vec{z}_{3}^{\pi} + \vec{z}_{4}^{\pi}}{2}, \frac{\vec{z}_{4}^{\pi} + \vec{z}_{1}^{\pi}}{2}, \frac{\vec{z}^{\pi}_{1} +  \vec{p}}{2}, \frac{\vec{z}^{\pi}_{2} +  \vec{p}}{2}, \frac{\vec{z}^{\pi}_{3} +  \vec{p}}{2}, \frac{\vec{z}^{\pi}_{4} +  \vec{p}}{2}\}.\]}
\normalsize
Аккуратно перебирая возможные расположения точек решетки $\Z^4$ в тройке параллелограммов $(\Delta^{\pi}, \Delta^{Q}, \Delta^{R})$, мы попадаем в одну из одиннадцати ситуаций, соответствующей одному из утверждений (1) - (11).

 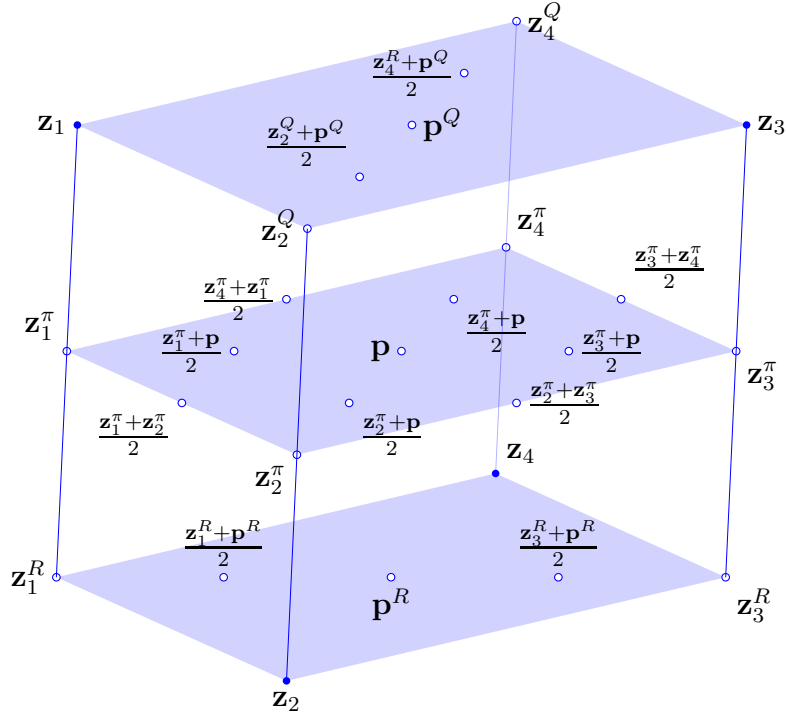
\begin{figure}[h]
  \centering
  \begin{tikzpicture}[x=10mm, y=7mm, z=-5mm, scale=1.1]
    \begin{scope}[rotate around z=0]

    \coordinate (z_1) at (-6,3,0);
    \coordinate (z_3) at (2,3,0);
    \coordinate (z_2) at (-1,-3,5);
    \coordinate (z_4) at (-1,-3,0);
    
    \coordinate (p_pl_1) at ($ (z_1)!1/2!(z_3) $);
    \coordinate (p_mi_1) at ($ (z_2)!1/2!(z_4) $);
    \coordinate (z_1_z_2_half) at ($ (z_1)!1/2!(z_2) $);
    \coordinate (z_2_z_3_half) at ($ (z_2)!1/2!(z_3) $);
    \coordinate (z_3_z_4_half) at ($ (z_3)!1/2!(z_4) $);
    \coordinate (z_4_z_1_half) at ($ (z_4)!1/2!(z_1) $); 
    
    \coordinate (p) at ($ (p_pl_1)!1/2!(p_mi_1) $);
    \coordinate (z_1_pl_1) at ($(p_mi_1) + (z_1) - (p_pl_1) $);
    \coordinate (z_3_pl_1) at ($(p_mi_1) + (z_3) - (p_pl_1) $);
    \coordinate (z_2_mi_1) at ($(p_pl_1) + (z_2) - (p_mi_1) $);
    \coordinate (z_4_mi_1) at ($(p_pl_1) + (z_4) - (p_mi_1) $);
    
    \coordinate (z_1_p) at ($ (z_1)!1/2!(z_1_pl_1) $);
    \coordinate (z_3_p) at ($(z_3)!1/2!(z_3_pl_1)$);
    \coordinate (z_2_p) at ($(z_2)!1/2!(z_2_mi_1)$);
    \coordinate (z_4_p) at ($(z_4)!1/2!(z_4_mi_1)$);
    
    \coordinate (p_1_p) at ($(z_1_pl_1)!1/2!(p_mi_1)$);
    \coordinate (p_3_p) at ($(z_3_pl_1)!1/2!(p_mi_1)$);
    
    \coordinate (p_2_q) at ($(z_2_mi_1)!1/2!(p_pl_1)$);
    \coordinate (p_4_q) at ($(z_4_mi_1)!1/2!(p_pl_1)$);
    
    \coordinate (z_1_p_1_2) at ($(p)!1/2!(z_1_p)$);
    \coordinate (z_3_p_1_2) at ($(p)!1/2!(z_3_p)$);
    \coordinate (z_2_p_1_2) at ($(p)!1/2!(z_2_p)$);
    \coordinate (z_4_p_1_2) at ($(p)!1/2!(z_4_p)$);

     \fill[blue!25,opacity=0.7] (z_1) -- (z_2_mi_1) -- (z_3) -- (z_4_mi_1) -- cycle;
     
      \fill[blue!25,opacity=0.7] (z_1_pl_1) -- (z_2) -- (z_3_pl_1) -- (z_4) -- cycle;
      
     \fill[blue!25,opacity=0.7] (z_1_p) -- (z_2_p) -- (z_3_p) -- (z_4_p) -- cycle;
    
     \node[fill=blue,circle,inner sep=1pt] at (z_1) {};
    \draw (z_1) node[left] {$\vec{z}_{1}$};
    
    \node[fill=blue,circle,inner sep=1pt] at (z_3) {};
    \draw (z_3) node[right] {$\vec{z}_{3}$};
    
    \node[fill=blue,circle,inner sep=1pt] at (z_2) {};
    \draw (z_2) node[below] {$\vec{z}_{2}$};
    
     \node[fill=blue,circle,inner sep=1pt] at (z_4) {};
    \draw (z_4) node[above right] {$\vec{z}_{4}$};

    \node[fill=white,circle,inner sep=1pt,draw=blue] at (z_1_pl_1) {};
    \draw (z_1_pl_1) node[left] {$\vec{z}_{1}^{R}$};
    
    \node[fill=white,circle,inner sep=1pt,draw=blue] at (z_3_pl_1) {};
    \draw (z_3_pl_1) node[below right] {$\vec{z}_{3}^{R}$};
    
    \node[fill=white,circle,inner sep=1pt,draw=blue] at (z_2_mi_1) {};
    \draw (z_2_mi_1) node[left] {$\vec{z}_{2}^{Q}$};
    
    \node[fill=white,circle,inner sep=1pt,draw=blue] at (z_4_mi_1) {};
    \draw (z_4_mi_1) node[right] {$\vec{z}_{4}^{Q}$};
    
     \node[fill=white,circle,inner sep=1pt,draw=blue] at (z_1_p) {};
    \draw (z_1_p) node[above left] {$\vec{z}_{1}^{\pi}$};
    
    \node[fill=white,circle,inner sep=1pt,draw=blue] at (z_3_p) {};
    \draw (z_3_p) node[below right] {$\vec{z}_{3}^{\pi}$};
    
     \node[fill=white,circle,inner sep=1pt,draw=blue] at (z_2_p) {};
    \draw (z_2_p) node[below left] {$\vec{z}_{2}^{\pi}$};
    
     \node[fill=white,circle,inner sep=1pt,draw=blue] at (z_4_p) {};
    \draw (z_4_p) node[above right] {$\vec{z}_{4}^{\pi}$};
    
     \node[fill=white,circle,inner sep=1pt,draw=blue] at (p) {};
    \draw (p) node[left] {$\vec p$};
    
    
    \node[fill=white,circle,inner sep=1pt,draw=blue] at (p_pl_1) {};
    \draw (p_pl_1) node[right] {$\vec{p}^{Q}$};
    
    \node[fill=white,circle,inner sep=1pt,draw=blue] at (p_mi_1) {};
    \draw (p_mi_1) node[below] {$\vec{p}^{R}$};
    
     \node[fill=white,circle,inner sep=1pt,draw=blue] at (z_1_z_2_half) {};
    \draw (z_1_z_2_half) node[below left] {$\frac{\vec{z}_{1}^{\pi} + \vec{z}_{2}^{\pi}}{2}$};
    
    \node[fill=white,circle,inner sep=1pt,draw=blue] at (z_2_z_3_half) {};
    \draw (z_2_z_3_half) node[right] {$\frac{\vec{z}_{2}^{\pi} + \vec{z}_{3}^{\pi}}{2}$};
    
         \node[fill=white,circle,inner sep=1pt,draw=blue] at (z_3_z_4_half) {};
    \draw (z_3_z_4_half) node[above right] {$\frac{\vec{z}_{3}^{\pi} + \vec{z}_{4}^{\pi}}{2}$};
    
     \node[fill=white,circle,inner sep=1pt,draw=blue] at (z_4_z_1_half) {};
    \draw (z_4_z_1_half) node[left] {$\frac{\vec{z}_{4}^{\pi} + \vec{z}_{1}^{\pi}}{2}$};
    
    \node[fill=white,circle,inner sep=1pt,draw=blue] at (p_1_p) {};
    \draw (p_1_p) node[above] {$\frac{\vec{z}_{1}^{R} + \vec{p}^{R}}{2}$};
    
    \node[fill=white,circle,inner sep=1pt,draw=blue] at (p_3_p) {};
    \draw (p_3_p) node[above] {$\frac{\vec{z}_{3}^{R} + \vec{p}^{R}}{2}$};
    
      \node[fill=white,circle,inner sep=1pt,draw=blue] at (p_2_q) {};
    \draw (p_2_q) node[above left] {$\frac{\vec{z}_{2}^{Q} + \vec{p}^{Q}}{2}$};
    
    \node[fill=white,circle,inner sep=1pt,draw=blue] at (p_4_q) {};
    \draw (p_4_q) node[left] {$\frac{\vec{z}_{4}^{R} + \vec{p}^{Q}}{2}$};
    
     \node[fill=white,circle,inner sep=1pt,draw=blue] at (z_1_p_1_2) {};
    \draw (z_1_p_1_2) node[left] {$\frac{\vec{z}^{\pi}_{1} +  \vec{p}}{2}$};
    
    \node[fill=white,circle,inner sep=1pt,draw=blue] at (z_3_p_1_2) {};
    \draw (z_3_p_1_2) node[right] {$\frac{\vec{z}^{\pi}_{3} +  \vec{p}}{2}$};
    
    \node[fill=white,circle,inner sep=1pt,draw=blue] at (z_2_p_1_2) {};
    \draw (z_2_p_1_2) node[below right] {$\frac{\vec{z}^{\pi}_{2} +  \vec{p}}{2}$};
    
    \node[fill=white,circle,inner sep=1pt,draw=blue] at (z_4_p_1_2) {};
    \draw (z_4_p_1_2) node[below right] {$\frac{\vec{z}^{\pi}_{4} +  \vec{p}}{2}$};
    
   \draw[-] [blue,very thin] (z_1_pl_1) -- (z_1);
   \draw[-] [blue,very thin] (z_3_pl_1) -- (z_3);
   \draw[-] [blue,very thin, opacity=2] (z_2_mi_1) -- (z_2);
   \draw[-] [blue,very thin, opacity=0.3] (z_4_mi_1) -- (z_4);

    \end{scope}
  \end{tikzpicture}
  \caption{Возможное расположение точек решетки $\Z^4$ в параллелограммах из построенной тройки $(\Delta^{\pi}, \Delta^{Q}, \Delta^{R})$}
  \label{integer_points}
\end{figure}

\end{proof}

 \section{Матрицы собственных симметрий}\label{matrix_and_algebraicity}

Напомним, что если задана дробь $\cf(l_1,l_2,l_3,l_4)=\cf(A)\in\gA_3$, будем считать, что подпространство $l_1$ порождается вектором $\vec l_1=(1,\alpha,\beta,\gamma)$. Тогда из предложения \ref{prop:more_than_pelle_n_dim} следует, что числа $1,\alpha,\beta, \gamma$ образуют базис поля $K=\Q(\alpha,\beta, \gamma)$ над $\Q$ и каждое $l_i$ порождается вектором $\vec l_i=(1,\sigma_i(\alpha),\sigma_i(\beta), \sigma_i(\gamma))$, где $\sigma_1(=\id),\sigma_2,\sigma_3, \sigma_4$ --- все вложения $K$ в $\R$. То есть, если через $\big(\vec l_1,\vec l_2,\vec l_3,\vec l_4\big)$ обозначить матрицу со столбцами $\vec l_1,\vec l_2,\vec l_3,\vec l_4$, получим
 \[
  \big(\vec l_1,\vec l_2,\vec l_3,\vec l_4\big)=
  \begin{pmatrix}
    1 & 1 & 1 & 1 \\
    \alpha & \sigma_2(\alpha) & \sigma_3(\alpha) & \sigma_4(\alpha) \\
    \beta & \sigma_2(\beta) & \sigma_3(\beta) & \sigma_4(\beta) \\
    \gamma & \sigma_2(\gamma) & \sigma_3(\gamma) & \sigma_4(\gamma)
  \end{pmatrix}.
\]

Мы будем обозначать через $\widetilde{\gA_{3}}$ множество всех трехмерных алгебраических цепных дробей, для которых 
\[\sigma_3(K) = K, \quad \sigma_4(K) = \sigma_2(K), \quad \sigma^{2}_{3} = \textup{id}, \quad \sigma_{4} = \sigma_{2}\sigma_{3}.\]
 
Для каждого $i=1, 2, \ldots, 10$ определим $\mathbf{CF}_{i}$ как класс дробей из $\widetilde{\gA_{3}}$, удовлетворяющих паре соотношений $\gR_{i}$, где
  
  $\gR_{1}$: $\beta + \sigma_{3}(\beta)= -\big(\alpha + \sigma_{3}(\alpha)\big),\ \gamma =   \sigma_{3}(\alpha)$;
  
  $\gR_{2}$: $\beta + \sigma_{3}(\beta)= 1 - \big(\alpha + \sigma_{3}(\alpha)\big) ,\ \gamma = \sigma_{3}(\alpha)$;
  
  $\gR_{3}$: $\beta + \sigma_{3}(\beta)= 2 - \big(\alpha + \sigma_{3}(\alpha)\big) ,\ \gamma = \sigma_{3}(\alpha)$;
  
  $\gR_{4}$: $\beta + \sigma_{3}(\beta)= -\big(\alpha + \sigma_{3}(\alpha)\big),\ \gamma =  \frac{\alpha + \sigma_{3}(\alpha)}{2}$;
  
  $\gR_{5}$: $\beta + \sigma_{3}(\beta)= 2 -\big(\alpha + \sigma_{3}(\alpha)\big),\ \gamma =   \frac{\alpha + \sigma_{3}(\alpha)}{2}$;
  
  $\gR_{6}$: $\beta + \sigma_{3}(\beta)= -\big(\alpha + \sigma_{3}(\alpha)\big),\ \gamma =  \frac{\alpha + \sigma_{3}(\alpha) + 1}{2}$;
  
  $\gR_{7}$: $\beta + \sigma_{3}(\beta)= 2 -\big(\alpha + \sigma_{3}(\alpha)\big),\ \gamma =  \frac{\alpha + \sigma_{3}(\alpha) + 1}{2}$;
  
  $\gR_{8}$: $\beta + \sigma_{3}(\beta)= 1 - \frac{\alpha + \sigma_{3}(\alpha)}{2},\ \gamma =  \frac{\alpha + \sigma_{3}(\alpha)}{2}$;
  
  $\gR_{9}$: $\beta + \sigma_{3}(\beta)= 2 - \frac{\alpha + \sigma_{3}(\alpha)}{2},\ \gamma =  \frac{\alpha + \sigma_{3}(\alpha)}{2}$;
  
  $\gR_{10}$: $\beta + \sigma_{3}(\beta)= 2 - \frac{\alpha + \sigma_{3}(\alpha)}{2},\ \gamma =  \frac{\sigma_{3}(\alpha) - \alpha}{4}$.

Покажем, что все дроби из классов $\mathbf{CF}_{i}$, палиндромичны для каждого $i=1, 2, \ldots, 10$. Положим $G_{1}, G_{2}, \ldots, G_{10}$ равными соответственно матрицам 
\[
  \left(\begin{smallmatrix}
    \phantom{-}1 & \phantom{-}0 & \phantom{-}0 & \phantom{-}0 \\
    \phantom{-}0 & \phantom{-}0 & \phantom{-}0 & \phantom{-}1 \\
    \phantom{-}0 &                  -1 &                   -1 &                  -1 \\
    \phantom{-}0 & \phantom{-}1 & \phantom{-}0 & \phantom{-}0
  \end{smallmatrix}\right),
  \left(\begin{smallmatrix}
    \phantom{-}1 & \phantom{-}0 & \phantom{-}0 & \phantom{-}0 \\
    \phantom{-}0 & \phantom{-}0 & \phantom{-}0 & \phantom{-}1 \\
    \phantom{-}1 &                  -1 &                   -1 &                  -1 \\
    \phantom{-}0 & \phantom{-}1 & \phantom{-}0 & \phantom{-}0
  \end{smallmatrix}\right),
  \left(\begin{smallmatrix}
    1 & \phantom{-}0 & \phantom{-}0 & \phantom{-}0 \\
    0 & \phantom{-}0 & \phantom{-}0 & \phantom{-}1 \\
    2 &                  -1 &                   -1 &                  -1 \\
    0 & \phantom{-}1 & \phantom{-}0 & \phantom{-}0
  \end{smallmatrix}\right),
  \left(\begin{smallmatrix}
    1 & \phantom{-}0 & \phantom{-}0 & \phantom{-}0 \\
    0 &                  -1 & \phantom{-}0 & \phantom{-}2 \\
    0 & \phantom{-}0 &                   -1 &                  -2 \\
    0 & \phantom{-}0 & \phantom{-}0 & \phantom{-}1
  \end{smallmatrix}\right),
  \left(\begin{smallmatrix}
    1 & \phantom{-}0 & \phantom{-}0 & \phantom{-}0 \\
    0 &                  -1 & \phantom{-}0 & \phantom{-}2 \\
    2 & \phantom{-}0 &                   -1 &                  -2 \\
    0 & \phantom{-}0 & \phantom{-}0 & \phantom{-}1
  \end{smallmatrix}\right),
 \]
 \[
  \left(\begin{smallmatrix}
    \phantom{-}1 & \phantom{-}0 & \phantom{-}0 & \phantom{-}0 \\
                     -1 &                   -1 & \phantom{-}0 & \phantom{-}2 \\
    \phantom{-}1 & \phantom{-}0 &                   -1 &                  -2 \\
    \phantom{-}0 & \phantom{-}0 & \phantom{-}0 & \phantom{-}1
  \end{smallmatrix}\right),
  \left(\begin{smallmatrix}
    \phantom{-}1 & \phantom{-}0 & \phantom{-}0 & \phantom{-}0 \\
                     -1 &                  -1 & \phantom{-}0 & \phantom{-}2 \\
    \phantom{-}3 & \phantom{-}0 &                   -1 &                  -2 \\
    \phantom{-}0 & \phantom{-}0 & \phantom{-}0 & \phantom{-}1
  \end{smallmatrix}\right),
  \left(\begin{smallmatrix}
    1 & \phantom{-}0 & \phantom{-}0 & \phantom{-}0 \\
    0 &                  -1 & \phantom{-}0 & \phantom{-}2 \\
    1 & \phantom{-}0 &                   -1 &                  -1 \\
    0 & \phantom{-}0 & \phantom{-}0 & \phantom{-}1
  \end{smallmatrix}\right),
  \left(\begin{smallmatrix}
    1 & \phantom{-}0 & \phantom{-}0 & \phantom{-}0 \\
    0 &                  -1 & \phantom{-}0 & \phantom{-}2 \\
    2 & \phantom{-}0 &                   -1 &                  -1 \\
    0 & \phantom{-}0 & \phantom{-}0 & \phantom{-}1
  \end{smallmatrix}\right),
  \left(\begin{smallmatrix}
    1 & \phantom{-}0 & \phantom{-}0 & \phantom{-}0 \\
    0 & \phantom{-}1 & \phantom{-}0 & \phantom{-}4 \\
    2 &                  -1 &                   -1 &                  -2 \\
    0 & \phantom{-}0 & \phantom{-}0 &                  -1
  \end{smallmatrix}\right).
\]

\begin{lemma}\label{oper_eq_4d_2}
  Пусть $\cf(l_1,l_2,l_3,l_4)\in\gA_3$ и $i\in\{1,2, \ldots,10\}$. Тогда цепная дробь $\cf(l_1,l_2,l_3,l_4)$ принадлежит классу $\mathbf{CF}_{i}$ в том и только в том случае, если $G_{i}$ --- её собственная симметрия и $\textup{ord}({\sigma_{G_{i}}}) = 2$.
\end{lemma}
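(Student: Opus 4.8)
The plan is to read the condition ``$G_i$ is a proper symmetry with $\textup{ord}(\sigma_{G_i})=2$'' through the matrix $M=\big(\vec l_1,\vec l_2,\vec l_3,\vec l_4\big)$. By the remark following \ref{eq:repres}, $G_i$ is a symmetry with permutation $\tau=\sigma_{G_i}$ exactly when $G_i\vec l_j=\mu_j\vec l_{\tau(j)}$ for suitable reals $\mu_j$, i.e. when $M^{-1}G_iM$ is a monomial matrix of $\tau$. The single observation driving everything is that each of $G_1,\dots,G_{10}$ has first row $(1,0,0,0)$, so $G_i$ preserves the first coordinate of every vector. Since each $\vec l_k=(1,\sigma_k(\alpha),\sigma_k(\beta),\sigma_k(\gamma))$ has first coordinate $1$, the equation $G_i\vec l_j=\mu_j\vec l_{\tau(j)}$ forces $\mu_j=1$ for all $j$. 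Consequently, as soon as $G_i$ is a symmetry of the form \ref{matrix_ord_2_0}, one automatically has $\mu_1\mu_3=\mu_2\mu_4=1$, and Lemma \ref{prod_lemma} makes properness \emph{free}; the genuine content of the lemma is the translation between the monomial-action equations and the arithmetic conditions $\widetilde{\gA_3}$ together with $\gR_i$.

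For the forward implication I would assume $\cf(l_1,l_2,l_3,l_4)\in\mathbf{CF}_i$, so the four embedding relations of $\widetilde{\gA_3}$ and the pair $\gR_i$ hold, and then \emph{directly} compute the four columns $G_i\vec l_1,\dots,G_i\vec l_4$. Substituting $\gR_i$ (to eliminate $\gamma$ and $\sigma_3(\beta)$) and using $\sigma_3^2=\id$ and $\sigma_4=\sigma_2\sigma_3$, I expect each column to collapse to exactly one $\vec l_k$; for instance with $G_1$ and $\gR_1$ one gets $G_1\vec l_1=\vec l_3$, $G_1\vec l_3=\vec l_1$, $G_1\vec l_2=\vec l_4$, $G_1\vec l_4=\vec l_2$. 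Thus $G_i$ is a symmetry with $\sigma_{G_i}=(1\,3)(2\,4)$, whose matrix in the basis $\vec l_1,\vec l_2,\vec l_3,\vec l_4$ is of the shape \ref{matrix_ord_2_0} with all $\mu_j=1$; Lemma \ref{prod_lemma} then certifies that $G_i$ is a proper symmetry and $\textup{ord}(\sigma_{G_i})=2$.

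For the converse I would assume $G_i$ is a proper symmetry with $\textup{ord}(\sigma_{G_i})=2$, so $G_i\vec l_j=\mu_j\vec l_{\tau(j)}$ with $\tau$ of order $2$, and the first-row argument gives $\mu_j=1$. Lemma \ref{rational_eigen} rules out $\tau(1)=1$ (an irrational eigenline of eigenvalue $1$ would contradict hyperbolicity of $A$), and comparing $G_i\vec l_1$ with $\vec l_{\tau(1)}$ shows $\sigma_{\tau(1)}$ sends the generators of $K$ into $K$, hence $\sigma_{\tau(1)}(K)=K$; relabelling this embedding as $\sigma_3$, the coordinatewise equalities yield $\gamma$ in terms of $\sigma_3(\alpha)$, the first relation of $\gR_i$, and (applying $\sigma_3$ once more and checking on every generator) $\sigma_3^2=\id$. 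Reading off $G_i\vec l_2=\vec l_{\tau(2)}$ then produces $\sigma_4=\sigma_2\sigma_3$ and $\sigma_4(K)=\sigma_2(K)$, completing the conditions defining $\widetilde{\gA_3}$. Together these say precisely that $\cf(l_1,l_2,l_3,l_4)\in\mathbf{CF}_i$; I would also note that the two-dimensional eigenspaces supplied by Lemma \ref{rational_subspace_2_2} are consistent with the fixed point on the sail.

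The main obstacle I anticipate is purely bookkeeping rather than conceptual: one must carry out the column computations and the back-substitutions separately for each of the ten pairs $(G_i,\gR_i)$, keep the denominators $2$ and $4$ appearing in $\gR_4$--$\gR_{10}$ under control, and verify that the derived automorphism identities hold on \emph{all} of $\alpha,\beta,\gamma$ (so that $\sigma_3^2=\id$ and $\sigma_4=\sigma_2\sigma_3$ are genuine, not merely relations on $\alpha$). One must also confirm in the converse that the order-$2$ permutation is necessarily $(1\,3)(2\,4)$ up to the labelling fixed by $\widetilde{\gA_3}$, rather than another involution; this follows once $\tau(1)$ is identified as the unique index with $\sigma_{\tau(1)}(K)=K$, but it is the step that requires the most care.
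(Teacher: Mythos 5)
Your proposal is correct and follows essentially the same route as the paper: the forward direction by direct computation of the columns $G_i\vec l_j$ using $\gR_i$ and the relations $\sigma_3^2=\id$, $\sigma_4=\sigma_2\sigma_3$, and the converse by reading the relations off the coordinatewise equalities $G_i\vec l_j=\mu_j\vec l_{\tau(j)}$, with Lemma \ref{prod_lemma} supplying properness since the first row $(1,0,0,0)$ of every $G_i$ forces all $\mu_j=1$ (the same normalization the paper uses implicitly). Your explicit remarks on why $\tau$ may be taken to be $(1\,3)(2\,4)$ after relabelling are a welcome clarification of the paper's ``up to a permutation of indices''.
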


\begin{proof}

Покажем, что $\cf(l_1,l_2,l_3,l_4)$ принадлежит классу $\mathbf{CF}_{1}$ в том и только в том случае, если $G_{1}$ --- её собственная симметрия и $\textup{ord}({\sigma_{G_{1}}}) = 2$.

  В силу леммы \ref{prod_lemma}, оператор $G\in\Gl_4(\Z)$ является собственной симметрией дроби $\cf(l_1,l_2,l_3,l_4)$ и $\textup{ord}({\sigma_{G}}) = 2$ тогда и только тогда, когда с точностью до перестановки индексов существуют такие действительные  числа $\mu_1,\mu_2,\mu_3,\mu_4$, что $G\big(\vec l_1,\vec l_2,\vec l_3,\vec l_4\big)=\big(\mu_3\vec l_3,\mu_4\vec l_4,\mu_1\vec l_1,\mu_2\vec l_2\big)$ и $\mu_1\mu_3 = \mu_2\mu_4 = 1$.
  
Пусть $\cf(l_1, l_2, l_3, l_4) \in \mathbf{CF}_{1}$. Заметим, что $\sigma_{2} = \sigma_{2}\sigma^{2}_{3} = \sigma_{4}\sigma_{3}$, $\sigma_2(\gamma) = \sigma_{2}\sigma_{3}(\alpha)$, $\sigma_3(\gamma) = \sigma^{2}_{3}(\alpha) = \alpha$, $\sigma_4(\gamma) = \sigma_{4}\sigma_{3}(\alpha) = \sigma_{2}(\alpha)$, $\sigma_{2}(\beta) + \sigma_{2}\sigma_{3}(\beta) = \sigma_{2}\big(\beta + \sigma_{3}(\beta)\big) =  \sigma_{2}\Big( - \big(\alpha + \sigma_{3}(\alpha)\big)\Big) = - \big(\sigma_{2}(\alpha) + \sigma_{2}\sigma_{3}(\alpha)\big)$ и $\sigma_{4}(\beta) = \sigma_{2}\sigma_{3}(\beta)$. Тогда

\[    G_{1} \big(\vec{l}_1, \vec{l}_2, \vec{l}_3, \vec{l}_4\big) = \]
 \[  \left( \tiny \begin{smallmatrix}
      1\phantom{-}\phantom{-} & 1\phantom{-}\phantom{-} & 1\phantom{-}\phantom{-} & 1 \\
      \sigma_{3}(\alpha)\phantom{-}\phantom{-} &  \sigma_{2}\sigma_{3}(\alpha)\phantom{-}\phantom{-} &  \alpha\phantom{-}\phantom{-} & \sigma_{2}(\alpha) \\
      - \beta - \big(\alpha + \sigma_{3}(\alpha)\big)\phantom{-}\phantom{-} &  - \sigma_{2}(\beta) - \big(\sigma_{2}(\alpha) + \sigma_{2}\sigma_{3}(\alpha)\big)\phantom{-}\phantom{-} &  - \sigma_{3}(\beta) - \big(\sigma_{3}(\alpha) + \alpha\big)\phantom{-}\phantom{-} & - \sigma_{2}\sigma_{3}(\beta) - \big(\sigma_{2}\sigma_{3}(\alpha) + \sigma_{2}(\alpha)\big) \\
      \alpha\phantom{-}\phantom{-} & \sigma_{2}(\alpha)\phantom{-}\phantom{-} & \sigma_{3}(\alpha)\phantom{-}\phantom{-} & \sigma_{2}\sigma_{3}(\alpha) 
    \end{smallmatrix}\right) = \]
\[ \big(\vec{l}_3, \vec{l}_4, \vec{l}_1, \vec{l}_2\big). \]
Следовательно, $G_{1}$ ---  собственная симметрия $\cf(l_1,l_2,l_3,l_4)$ и $\textup{ord}({\sigma_{G_{1}}}) = 2$. Обратно, предположим, $G_{1}$ ---  собственная симметрия $\cf(l_1,l_2,l_3,l_4)$ и $\textup{ord}({\sigma_{G_{1}}}) = 2$. Тогда существует $\mu_3$, такое что с точностью до перестановки индексов
  \[
    G_{1}\vec{l}_1=
    \begin{pmatrix}
      1  \\
      \gamma \\
      - \beta - (\alpha + \gamma) \\
      \alpha
    \end{pmatrix}
    =\mu_3
    \begin{pmatrix}
      1  \\
      \sigma_{3}(\alpha) \\
      \sigma_{3}(\beta) \\
      \sigma_{3}(\gamma)
    \end{pmatrix}, 
  \]
  откуда $\mu_3 = 1$,  $\gamma =\sigma_{3}(\alpha)$, $\sigma^{2}_3(\alpha)=\sigma_{3}(\gamma) = \alpha$, $\sigma^{2}_{3}(\gamma) = \sigma_{3}(\alpha) = \gamma$, $\beta + \sigma_{3}(\beta) = - \big(\alpha + \sigma_{3}(\alpha)\big)$, $\sigma^{2}_3(\beta) = - \sigma_3(\beta)  - \big(\sigma_3(\alpha) + \sigma^{2}_{3}(\alpha)\big) = - \sigma_3(\beta) - \big(\alpha + \sigma_3(\alpha)\big) = \beta$. Существует $\mu_4$, такое что
  \[ 
    G_{1}\vec{l}_2=
    \begin{pmatrix}
      1  \\
      \sigma_{2}(\gamma) \\
      -  \sigma_{2}(\beta) - \big(\sigma_{2}(\alpha) +  \sigma_{2}(\gamma)\big) \\
       \sigma_{2}(\alpha)
    \end{pmatrix}
    =\mu_4
    \begin{pmatrix}
      1  \\
      \sigma_{4}(\alpha) \\
      \sigma_{4}(\beta) \\
      \sigma_{4}(\gamma)
    \end{pmatrix},
  \]
   откуда $\mu_4= 1$, $\sigma_{4}(\alpha) = \sigma_{2}(\gamma) = \sigma_{2}\sigma_{3}(\alpha)$, $\sigma_{4}(\gamma) = \sigma_{2}(\alpha) = \sigma_{2}\sigma_{3}(\gamma)$, $\sigma_{4}(\beta) = -  \sigma_{2}(\beta) - \big(\sigma_{2}(\alpha) +  \sigma_{2}(\gamma)\big) = \sigma_{2}\big(-  \beta - (\alpha + \gamma)\big) = \sigma_{2}\sigma_{3}(\beta)$. Следовательно, $\cf(l_1, l_2, l_3, l_4) \in \mathbf{CF}_{1}$, так как числа $1,\alpha,\beta, \gamma$ образуют базис поля $K=\Q(\alpha,\beta, \gamma)$.
  
  Для $i=2, \ldots, 10$ рассуждения аналогичны.

\end{proof}

\section{Доказательство теоремы \ref{theorem_proper}}\label{proof_theorem_proper}
 
Обозначим для каждого $i=1,\ldots,10$ через $\overline{\mathbf{CF}}_i$ образ $\mathbf{CF}_i$ при действии группы $\Gl_4(\Z)$:
\[
  \overline{\mathbf{CF}}_i=
  \Big\{ \cf(l_1,l_2,l_3,l_4)\in\gA_3 \,\Big|\, \exists X\in\Gl_4(\Z):X\big(\cf(l_1,l_2,l_3,l_4)\big)\in\mathbf{CF}_i \Big\}.
\]

\begin{lemma}\label{l:CF_instead_of_statements}
  Для дроби $\cf(l_1,l_2,l_3,l_4)\in\gA_3$ выполняется условие $(i)$ теоремы \ref{theorem_proper} тогда и только тогда, когда $\cf(l_1,l_2,l_3,l_4)$ принадлежит классу $\overline{\mathbf{CF}}_i$, где $i\in\{1, 2,\ldots, 10\}$.
\end{lemma}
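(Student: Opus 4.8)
The plan is to unwind both sides of the equivalence to a common statement about the defining vector of the fraction obtained from the $\Gl_4(\Z)$-action, and then to match, index by index, relation $(i)$ of Theorem \ref{theorem_proper} against the pair $\gR_i$. First I would record how the action behaves on coordinates. If $\vec l_1 = (1,\alpha,\beta,\gamma)$ and $X\in\Gl_4(\Z)$, write $f_j = \sum_k X_{jk}(\vec l_1)_k \in K$ for the entries of $X\vec l_1$; since $\sigma_1 = \id$ these are the numbers $\sigma_1(f_j)$, and because $X$ is integral one has $(X\vec l_i)_j = \sigma_i(f_j)$ for every embedding. Hence, whenever $f_0\neq 0$ (which may be assumed, by irrationality together with the normalization convention of Proposition \ref{prop:more_than_pelle_n_dim}), the fraction $X\cf$ has defining vector $(1,\alpha^*,\beta^*,\gamma^*)$ with $\alpha^*=f_1/f_0$, $\beta^*=f_2/f_0$, $\gamma^*=f_3/f_0\in K$, its $i$-th subspace is spanned by $(1,\sigma_i(\alpha^*),\sigma_i(\beta^*),\sigma_i(\gamma^*))$, and $\Q(\alpha^*,\beta^*,\gamma^*)=K$ with the \emph{same} embeddings $\sigma_1,\ldots,\sigma_4$. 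In particular the four defining relations of $\widetilde{\gA_3}$ are invariant under the action, so $X\cf\in\widetilde{\gA_3}\iff\cf\in\widetilde{\gA_3}$, and therefore $\overline{\mathbf{CF}}_i=\widetilde{\gA_3}\cap\{\cf : \exists X,\ X\cf\text{ satisfies }\gR_i\}$.

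Next I would match the two descriptions. By the definition of $\sim$, the relation $(1,\alpha,\beta,\gamma)\sim(1,\omega,\psi,\kappa_i)$ of item $(i)$ — where $\kappa_i$ is the prescribed fourth coordinate ($\omega'$, $\tfrac{\omega+\omega'}2$, $\tfrac{\omega+\omega'+1}2$ or $\tfrac{\omega'-\omega}4$) — holds exactly when some $X\cf$ has defining vector $(1,\omega,\psi,\kappa_i)$, i.e. $\alpha^*=\omega$, $\beta^*=\psi$, $\gamma^*=\kappa_i$. Since the embeddings are preserved, $\omega'=\sigma_3(\omega)=\sigma_3(\alpha^*)$ and $\psi'=\sigma_3(\psi)=\sigma_3(\beta^*)$; substituting these turns the fourth-coordinate prescription of item $(i)$ into the second relation of $\gR_i$ (for $i=1$, into $\gamma^*=\sigma_3(\alpha^*)$) and the trace relation $\psi+\psi'=\ldots$ into the first relation of $\gR_i$, namely $\beta^*+\sigma_3(\beta^*)=\ldots$. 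This is a direct term-by-term comparison, which I would carry out in full for $i=1$ and note is identical in form for $i=2,\ldots,10$.

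The only point beyond this bookkeeping is the requirement that $\omega,\psi$ have degree $4$. In the direction $\overline{\mathbf{CF}}_i\Rightarrow(i)$ I would set $\omega=\alpha^*$, $\psi=\beta^*$ and prove they generate $K$. Decompose $K=F\oplus F^{-}$ into the $(+1)$- and $(-1)$-eigenspaces of $\sigma_3$ (here $[F:\Q]=2$, because $\sigma_3^2=\id$ and $\sigma_3\neq\id$), and write $\alpha^*=a+b$ with $a\in F$, $b\in F^{-}$. The hypothesis that $1,\alpha^*,\beta^*,\gamma^*$ is a $\Q$-basis of $K$ (valid since $X\cf\in\gA_3$, by Proposition \ref{prop:more_than_pelle_n_dim}) forces, through the relation of $\gR_i$ expressing $\gamma^*$ linearly in $\alpha^*,\sigma_3(\alpha^*)$, that $a\notin\Q$ and $b\neq 0$; then, were $\alpha^*$ of degree $2$, equating the $F^{-}$-components in $(\alpha^*)^2=-u\alpha^*-v$ ($u,v\in\Q$) would give $b(2a+u)=0$, whence $a=-u/2\in\Q$, a contradiction. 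So $\alpha^*$, and symmetrically $\psi=\beta^*$, has degree $4$. The converse direction is immediate: degree-$4$ numbers $\omega,\psi$ satisfying item $(i)$ produce, via $\sim$, an $X$ with $X\cf$ satisfying $\gR_i$, while $X\cf\in\widetilde{\gA_3}$ is part of condition $(i)$.

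The main obstacle is this last, arithmetic step: verifying the degree-$4$ claim uniformly across all ten relations, since the linear form expressing $\gamma^*$ — and hence which eigencomponents are forced nonzero — changes from case to case. In particular for $\gR_8$–$\gR_{10}$, where both relations mix $\alpha^*$ and $\sigma_3(\alpha^*)$ with denominators $2$ and $4$, the integrality bookkeeping and the eigendecomposition argument must be redone with care. By contrast, the field-structure invariance established in the first paragraph is precisely what reduces the whole lemma to this finite, case-by-case verification rather than a fresh geometric argument.
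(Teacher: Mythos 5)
Your proposal is correct and follows essentially the same route as the paper: the paper's own proof is a three-line remark observing that the $\Gl_4(\Z)$-action preserves hyperbolicity, that eigenspaces are recovered from eigenvectors, and that the equivalence $\sim$ then translates item $(i)$ directly into the relations $\gR_i$ --- which is exactly the bookkeeping you carry out explicitly in your first two paragraphs. The one substantive addition is your third paragraph verifying that $\omega=\alpha^*$ and $\psi=\beta^*$ have degree $4$; the paper treats this as implicit, and your eigenspace-decomposition argument for $\sigma_3$ (with the case analysis over $\gR_1,\dots,\gR_{10}$) is a correct and worthwhile filling-in of that detail rather than a different method.
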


\begin{proof}
  Для любого $X\in\Gl_4(\Z)$ гиперболичность оператора $A\in\Gl_4(\Z)$ равносильна гиперболичности оператора $XAX^{-1}$. При этом собственные подпространства гиперболического оператора однозначно восстанавливаются по любому его собственному вектору. Остаётся воспользоваться определением эквивалентности из параграфа \ref{intro}.
\end{proof}

Теорему \ref{theorem_proper} при помощи леммы \ref{l:CF_instead_of_statements}  можно переформулировать следующим образом: \emph{дробь $\cf(l_1,l_2,l_3,l_4)\in\gA_3$ имеет собственную симметрию $G$ тогда и только тогда, когда она принадлежит одному из классов $\overline{\mathbf{CF}}_i$, где $i\in\{1,2,\ldots,10\}$}.

  \begin{proof}[теоремы \ref{theorem_proper}]
Если $\cf(l_1,l_2,l_3,l_4)$ принадлежит какому-то $\overline{\mathbf{CF}}_i$, то по лемме \ref{oper_eq_4d_2} она имеет собственную симметрию $G$, ибо действие оператора из $\Gl_4(\Z)$ сохраняет свойство существования у алгебраической цепной дроби собственной симметрии.

Обратно, пусть дробь $\cf(l_1,l_2,l_3,l_4)\in\gA_3$ имеет собственную симметрию $G$. Положим $F=G'$ и рассмотрим точки $\vec z_1$, $\vec z_2$, $\vec z_3$, $\vec z_4$ из леммы \ref{main_lem_2}. Обозначим также через $\vec{e}_1$, $\vec{e}_2$, $\vec{e}_3$, $\vec{e}_4$ стандартный базис $\R^4$. Для точек $\vec z_1$, $\vec z_2$, $\vec z_3$, $\vec z_4$ выполняется хотя бы одно из утверждений \textup{(1)} - \textup{(11)} леммы \ref{main_lem_2}.

Пусть выполняется утверждение \textup{(1)} леммы \ref{main_lem_2}. Рассмотрим такой оператор $X_{1} \in \Gl_4(\Z)$, что
\[X_{1}\big(\vec{z}_1, \vec{z}_2, \vec{z}_3, \frac{1}{4}(\vec{z}_{1}+\vec{z}_{2}+\vec{z}_{3}+\vec{z}_{4})\big) = \big(\vec{e}_1 - \vec{e}_2, \vec{e}_1 + \vec{e}_4, \vec{e}_1 + \vec{e}_3 - \vec{e}_4, \vec{e}_1\big).\]
Тогда $X_{1}(\vec{z}_4) = X_{1}(4\cdot\frac{1}{4}(\vec{z}_{1}+\vec{z}_{2}+\vec{z}_{3}+\vec{z}_{4}) - \vec{z}_1 - \vec{z}_2 - \vec{z}_3) = \vec{e}_1 + \vec{e}_2 - \vec{e}_3$ и $X_{1}GX_{1}^{-1} = G_{1}$, так как по лемме \ref{main_lem_2}
\[X_{1}GX_{1}^{-1}\big(\vec{e}_1 - \vec{e}_2, \vec{e}_1 + \vec{e}_4, \vec{e}_1 + \vec{e}_3 - \vec{e}_4, \vec{e}_1 + \vec{e}_2 - \vec{e}_3\big) = \]
\[\big(\vec{e}_1 + \vec{e}_3 - \vec{e}_4, \vec{e}_1 + \vec{e}_2 - \vec{e}_3, \vec{e}_1 - \vec{e}_2, \vec{e}_1 + \vec{e}_4\big).\]
Стало быть, $X_{1}\big(\cf(l_1,l_2,l_3,l_4)\big) \in  \mathbf{CF}_{1}$, то есть $\cf(l_1,l_2,l_3,l_4)\in\overline{\mathbf{CF}}_1$.

Рассуждения аналогичны для случаев, когда выполняется утверждение ($i$) леммы \ref{main_lem_2}, где $i=2, \ldots, 10$.

Если выполняется утверждение \textup{(11)} леммы \ref{main_lem_2}, то, как и при выполнении утверждения \textup{(2)} леммы \ref{main_lem_2}, вновь $\cf(l_1,l_2,l_3,l_4)\in\overline{\mathbf{CF}}_2$. Действительно, пусть выполняется утверждение \textup{(11)} леммы \ref{main_lem_2}. Рассмотрим такой оператор $X_{11} \in \Gl_4(\Z)$, что
\[X_{11}\big(\vec{z}_1, \frac{1}{2}(\vec{z}_1 + \vec{z}_2), \vec{z}_3, \frac{1}{2}(\vec{z}_1 - \vec{z}_2 + \vec{z}_3 + \vec{z}_4)\big) = \big(\vec{e}_1, \vec{e}_1 + \vec{e}_4, \vec{e}_1 + \vec{e}_3, \vec{e}_1 + \vec{e}_2 - \vec{e}_4\big).\]
Тогда $X_{11}(\vec{z}_2) = X_{11}(2\cdot\frac{1}{2}(\vec{z}_1 + \vec{z}_2) - \vec{z}_1) = \vec{e}_1 + 2\vec{e}_4$, $X_{11}(\vec{z}_4) = X_{11}(2\cdot\frac{1}{2}(\vec{z}_1 - \vec{z}_2 + \vec{z}_3 + \vec{z}_4) - \vec{z}_1 + \vec{z}_2 -  \vec{z}_3) = \vec{e}_1 + 2\vec{e}_2 - \vec{e}_3$ и $X_{11}GX_{11}^{-1} = G_{2}$, так как по лемме \ref{main_lem_2}
\[X_{11}GX_{11}^{-1}\big(\vec{e}_1, \vec{e}_1 + 2\vec{e}_4, \vec{e}_1 + \vec{e}_3, \vec{e}_1 + 2\vec{e}_2 - \vec{e}_3\big) = \]
\[\big(\vec{e}_1 + \vec{e}_3, \vec{e}_1 + 2\vec{e}_2 - \vec{e}_3, \vec{e}_1, \vec{e}_1 + 2\vec{e}_4\big).\]
Стало быть, $X_{11}\big(\cf(l_1,l_2,l_3,l_4)\big) \in  \mathbf{CF}_{2}$, то есть $\cf(l_1,l_2,l_3,l_4)\in\overline{\mathbf{CF}}_2$.
\end{proof}

\section{Пример палиндромичной цепной дроби, не обладающей собственными циклическими симметриями}\label{proof_proper_but_not_cyclic}

 Рассмотрим вещественные числа $\theta_{1} = \sqrt{4 + \sqrt{2}}$ и $\theta_{2} = \sqrt{4 - \sqrt{2}}$. Заметим, что $\theta_{1}^{2} = 4 +  \sqrt{2}$, а значит $\theta_{1}$ является корнем уравнения $x^{4} - 8 x^{2} + 14 = 0$. В силу критерия Эйзенштейна для $p=2$ многочлен $f(x) = x^{4} - 8 x^{2} + 14$ неприводим над $\Q$. Таким образом, $f(x)$ --- минимальный многочлен для $\theta_{1}$ и 
 \[f(x) = (x - \theta_{1})(x - \theta_{2})(x + \theta_{1})(x + \theta_{2}).\]
 Рассмотрим такие вложения  $\sigma_{1}, \sigma_{2}, \sigma_{3}, \sigma_{4}$ вполне вещественного поля $\Q(\theta_{1})$, что $\sigma_{1}(\theta_{1}) = \theta_{1}$, $\sigma_{2}(\theta_{1}) = \theta_{2}$, $\sigma_{3}(\theta_{1}) = -\theta_{1}$, $\sigma_{4}(\theta_{1}) = -\theta_{2}$. Пусть $K_{1} = \Q(\theta_{1})$, $K_{2} = \Q(\theta_{2})$, $K_{3} = \Q(-\theta_{1})$, $K_{4} = \Q(-\theta_{2})$ --- сопряженные поля над $\Q$. Ясно, что $K_{1} = K_{3}$, $K_{2} = K_{4}$ и $\sigma^{2}_{3} = \textup{id}$, $\sigma_{4} = \sigma_{2}\sigma_{3}$. 

Предположим, что $K_{1} = K_{2}$. Поскольку $(x - \theta_{1})(x + \theta_{1}) = x^{2}  - 4 - \sqrt{2} \in \Q(\sqrt{2})[x]$, то $K_{1}$ --- нормальное расширение степени $2$ поля $\Q(\sqrt{2})$. При этом $(x - \theta_{2})(x + \theta_{2}) = x^{2}  - 4 +\sqrt{2} \in \Q(\sqrt{2})[x]$. Пусть $\phi \in \gal{\big(K_{1}/\Q(\sqrt{2})\big)}$. Тогда $\phi(\theta_{1}\theta_{2}) = (-\theta_{1})(-\theta_{2}) = \theta_{1}\theta_{2}$, а значит $\theta_{1}\theta_{2} = \sqrt{14}  \in \Q(\sqrt{2})$, чего не может быть. Таким образом, $K_{1} \neq K_{2}$.
 
 Поскольку $\theta = \theta_{1}$ --- примитивный элемент расширения $K_{1}$, то набор чисел $1, \theta, \theta^{2}, \theta^{3}$ является базисом $K_{1}$. Положим $\omega = \theta + \theta^{2}$ и $\psi = -\theta^{2} + \frac{1}{2}\theta^{3}$. Тогда $\omega'' = \sigma_{3}(\omega) = -\theta + \theta^{2}$, $\psi'' = \sigma_{3}(\psi) = -\theta^{2} - \frac{1}{2}\theta^{3}$. Заметим, что $\psi + \psi'' = -2\theta^{2} = - (\omega + \omega'')$ и набор чисел $1$, $\omega$, $\psi$, $\omega''$ является базисом $K_{1}$, поскольку
 
   \[ \begin{pmatrix}
    1 & \phantom{-}0 & \phantom{-}0 & \phantom{-}0 \\
    0 & \phantom{-}1 & \phantom{-}1 & \phantom{-}0 \\
    0 & \phantom{-}0 &                  -1 & \phantom{-}\frac{1}{2} \\
    0 &                  -1 & \phantom{-}1 & \phantom{-}0 
  \end{pmatrix}
    \begin{pmatrix}
      1  \\
      \theta  \\
      \theta^{2} \\
      \theta^{3}
    \end{pmatrix}
    =
    \begin{pmatrix}
      1  \\
      \omega \\
      \psi \\
      \omega''
    \end{pmatrix}.
  \]
 
 Теперь, полагая, что $\alpha = \omega$, $\beta = \psi$, $\gamma = \omega''$, с помощью предложения \ref{prop:more_than_pelle_n_dim} можно построить алгебраическую цепную дробь $\cf(l_1,l_2,l_3,l_4)\in\widetilde{\gA_{3}}$, где подпространство $l_1$ порождено вектором $(1, \alpha, \beta, \gamma)$. Поскольку выполняется утверждение (1) теоремы \ref{theorem_proper}, то $\cf(l_1,l_2,l_3,l_4)$ --- палиндромичная цепная дробь. При этом, поскольку $K_{1} \neq K_{2}$, то $\cf(l_1,l_2,l_3,l_4)$ не обладает циклическими симметриями (см. работу \cite{tlyust}).

\section*{Благодарности}

Автор благодарит О.Н. Германа за внимание к работе и полезные обсуждения результатов.

Автор является победителем конкурса «Junior Leader» Фонда развития теоретической физики и математики «БАЗИС» и хотел бы поблагодарить жюри и спонсоров конкурса.

\end{document}